\numberwithin{equation}{section}
\numberwithin{figure}{section}
\theoremstyle{plain}
\newtheorem{thm}{\protect\theoremname}
\theoremstyle{definition}
\newtheorem{defn}[thm]{\protect\definitionname}
\theoremstyle{plain}
\newtheorem{conjecture}[thm]{\protect\conjecturename}
\theoremstyle{remark}
\newtheorem{rem}[thm]{\protect\remarkname}
\theoremstyle{plain}
\newtheorem{prop}[thm]{\protect\propositionname}
\theoremstyle{plain}
\newtheorem{cor}[thm]{\protect\corollaryname}
\theoremstyle{plain}
\newtheorem{lem}[thm]{\protect\lemmaname}
\date{}
\setlist[itemize]{noitemsep,topsep=5pt}
\titleformat{\section}{\large\bfseries\filleft}{\thesection}{1em}{}[{\titlerule[0.8pt]}]
\newcommand{\btimes}{\mathbin{\rotatebox[origin=c]{36}{$\pentagram$}}}
\newcommand\bleh{%
  \mathrel{\ooalign{\hss$\btimes$\hss\cr%
  \kern0.025ex\raise-0.88ex\hbox{\scalebox{2.5}
  {$\circ$}}}}}
\renewcommand\labelenumi{(\roman{enumi})}
\renewcommand\theenumi\labelenumi
\DeclareMathOperator{\Spec}{Spec}
\DeclareMathOperator{\ch}{char}
\DeclareMathOperator{\Pic}{Pic}
\DeclareMathOperator{\Mor}{Mor}
\DeclareMathOperator{\Hilb}{Hilb}
\DeclareMathOperator{\ev}{ev}
\DeclareMathOperator{\Sing}{Sing}
\let\oldtheorem\thm
\renewcommand{\thm}{\oldtheorem\normalfont}
\let\oldprop\prop
\renewcommand{\prop}{\oldprop\normalfont}
\let\oldcor\cor
\renewcommand{\cor}{\oldcor\normalfont}
\let\oldlem\lem
\renewcommand{\lem}{\oldlem\normalfont}
\newenvironment{ack}{\textit{Acknowledgements.}}{}
\providecommand{\conjecturename}{Conjecture}
\providecommand{\corollaryname}{Corollary}
\providecommand{\definitionname}{Definition}
\providecommand{\lemmaname}{Lemma}
\providecommand{\propositionname}{Proposition}
\providecommand{\remarkname}{Remark}
\providecommand{\theoremname}{Theorem}
\begin{document}
\global\long\def\A{\mathbb{A}}%

\global\long\def\C{\mathbb{C}}%

\global\long\def\E{\mathbb{E}}%

\global\long\def\F{\mathbb{F}}%

\global\long\def\G{\mathbb{G}}%

\global\long\def\H{\mathbb{H}}%

\global\long\def\N{\mathbb{N}}%

\global\long\def\P{\mathbb{P}}%

\global\long\def\Q{\mathbb{Q}}%

\global\long\def\R{\mathbb{R}}%

\global\long\def\O{\mathcal{O}}%

\global\long\def\Z{\mathbb{Z}}%

\global\long\def\ep{\varepsilon}%

\global\long\def\laurent#1{(\!(#1)\!)}%

\global\long\def\wangle#1{\left\langle #1\right\rangle }%

\global\long\def\ol#1{\overline{#1}}%

\global\long\def\mf#1{\mathfrak{#1}}%

\global\long\def\mc#1{\mathcal{#1}}%

\global\long\def\norm#1{\left\Vert #1\right\Vert }%

\global\long\def\et{\textup{ét}}%

\global\long\def\Et{\textup{Ét}}%

\title{A converse to geometric Manin's conjecture for general low degree
hypersurfaces and Poincaré duality}
\author{Matthew Hase-Liu\thanks{Department of Mathematics, Columbia University, New York, NY}\thanks{Email address: m.hase-liu@columbia.edu}}
\maketitle
\begin{abstract}
Geometric Manin's conjecture predicts that components of the moduli
space of curves on a Fano variety parametrizing non-free curves are
pathological and arise from ``accumulating'' morphisms that increase
the Fujita invariant. By passing to positive characteristic and employing
a higher genus generalization of the circle method, we prove a converse
to this conjecture for general hypersurfaces $X$ in $\P^{n}$ of
degree $d\le n/4+3/2$, namely that there are no such accumulating
maps to $X$. One consequence of this is a version of Poincaré duality
for these moduli spaces in a range.
\end{abstract}
\tableofcontents{}

\section{Introduction}

The geometry of a Fano variety $X$ is intimately connected with the
geometry of the moduli space of curves on $X$. Mori, for instance,
used this moduli space in \cite{Mori} and introduced the technique
of bend-and-break in positive characteristic to prove Hartshorne's
conjecture that a smooth projective variety over an algebraically
closed field with ample tangent bundle is isomorphic to projective
space. 

More recently, there has been much interest in understanding well-behaved
families of curves on Fano varieties, which geometric Manin's conjecture
as stated in \cite{Lehmann_Tanimoto_2019} predicts are controlled
by certain invariants arising from the minimal model program. Specifically,
components of the moduli space that parametrize free curves are generically
smooth and are considered to be well-behaved, whereas components that
parametrize non-free curves tend to be poorly-behaved; Batryev's first
heuristic for geometric Manin's conjecture predicts that these non-free
components come from accumulating morphisms $Y\to X$, namely those
such that the Fujita invariant of $Y$ is at least as large as that
of $X$.

Ground-breaking work of Lehmann, Riedl, and Tanimoto in \cite{nonfreesections}
verified this heuristic more generally for classifying non-free sections
of $\C$-Fano fibrations in terms of the Fujita invariant. 

In this paper, we prove a converse statement to this heuristic for
general low degree hypersurfaces over $\C$, for which many aspects
of geometric Manin's conjecture are already well-understood (\cite{bilu2023motivic,cohomology_circle,BrowningSawinFree,BrowningVisheRatCurves,terminal,HarrisRothStarrRatCurvesI,haseliu2024higher,non-smooth,nonfreecurves,codimjumping,nonfreesections,Lehmann_Tanimoto_2019,cubichypersurfaces,Pugin_Thesis,RiedlYang,sawinwaring,starr2003}).
Families of rational curves on low degree smooth hypersurfaces $X$
are often irreducible and have the expected dimension, and so a reasonable
prediction is that there are in fact no accumulating maps to $X$. 

Let us briefly recall the definition of the Fujita invariant.
\begin{defn}
Let $X$ be a smooth projective variety over $\C$ and $L$ a big
and nef $\Q$-divisor on $X$. The \textit{Fujita invariant}\textbf{
}of $X$ is 
\[
a(X,L)\coloneqq\min\left\{ t\in\R\colon t[L]+\left[K_{X}\right]\text{ is pseudo-effective}\right\} .
\]
For example, if $X$ is Fano and $K_{X}$ is the canonical divisor,
then $a(X,-K_{X})=1$. 
\end{defn}

In Tanimoto's forthcoming book ``Birational Geometry and Manin's
Conjecture'', the above prediction is stated precisely as the following
conjecture.
\begin{conjecture}
[Tanimoto] Let $X\subset\P_{\C}^{n}$ be a smooth hypersurface of
degree $d$. Suppose $d\le n-2$ and $X$ is general, or $d\le n/2.$
Then, there is no proper subvariety $Y\subset X$ such that $a\left(Y,H|_{Y}\right)\ge a\left(X,H\right)$. 
\end{conjecture}

By establishing expected dimension results for spaces of high degree
maps from higher genus curves to hypersurfaces, we showed in \cite{haseliu2024higher}
that for $n$ exponentially large compared to $d$, that every smooth
hypersurface in $\P_{\C}^{n}$ of degree $d$ satisfies this conjecture.

\begin{defn}
Let $X$ be a smooth Fano variety over $\C$. An \textit{accumulating
map}\textbf{ $f\colon Y\to X$ }from a variety $Y$ is a morphism
$f$ that is generically finite, non-birational, and satisfies $a(Y,-f^{*}K_{X})\ge1$.
\end{defn}

Our main result proves many cases of Tanimoto's conjecture when $X$
is general or Fermat:
\begin{thm}
\label{thm:smallfujitageneral}Let $X\subset\P_{\C}^{n}$ be a general
or Fermat hypersurface of degree $d\ge5$ with $n\ge4d-6$. Then,
there are no accumulating maps to $X$. 
\end{thm}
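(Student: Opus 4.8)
The plan is to argue by contradiction. Suppose $f\colon Y\to X$ is an accumulating map, and replace $Y$ by a smooth projective model. Set $L\coloneqq -f^{*}K_{X}$, the pullback of the ample divisor $\O_{X}(n+1-d)$ on $X$; it is big and nef, and the ramification divisor $R=K_{Y}-f^{*}K_{X}=L+K_{Y}$ is effective. Effectivity of $R$ forces $a(Y,L)\le 1$, while the hypothesis gives $a(Y,L)\ge 1$, so $a(Y,L)=1$; in particular $R-\varepsilon L$ is not pseudoeffective for any $\varepsilon>0$. (If $R\equiv 0$ then $f$ is unramified in codimension one, hence \'etale by purity, hence an isomorphism since $X$ is simply connected ($\dim X\ge 3$), contradicting non-birationality; so $R\not\equiv 0$.) Using the duality between the pseudoeffective cone of divisors and the closure of the cone of movable curves, together with the structure theory of the Fujita invariant \cite{Lehmann_Tanimoto_2019,nonfreesections}, I would extract a dominating family of curves on $Y$ whose general member $C$ is free on $Y$ and satisfies $R\cdot C=0$, so that $-K_{Y}\cdot C=L\cdot C=(n+1-d)\bigl(H\cdot f_{*}[C]\bigr)$; write $g$ for its genus. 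By attaching free curves on $Y$ I would then arrange $e\coloneqq H\cdot f_{*}[C]$ to be arbitrarily large. The point is that one cannot in general reduce to the case $g=0$ here (a minimal dominating family need not admit a bend-and-break to rational curves of $R$-degree zero), which is exactly why the higher-genus theory is needed.

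Pushing this family forward along $f$ gives a family $T$ of genus-$g$, degree-$e$ stable maps to $X$, none of whose components is contracted; since at most $\deg f$ curves on $Y$ lie over a given curve on $X$, the classifying map $T\to\overline{M}_{g}(X,e)$ is generically finite onto its image, which therefore has dimension $\dim T=(-K_{Y})\cdot C+(\dim Y-3)(1-g)=(n+1-d)e+(n-4)(1-g)$ — exactly the expected dimension of $\overline{M}_{g}(X,e)$, since $\dim X=\dim Y=n-1$. Let $\mathcal U\subseteq\overline{M}_{g}(X,e)$ be the dense open locus of stable maps with no component contracted by $f$ into its exceptional locus; over $\mathcal U$, lifts along $f$ are finite in number, so the finite (hence proper) space of such lifts has closed image — that is, the locus $\mathcal{M}_{f}\subseteq\mathcal U$ of maps factoring through $f$ is closed in $\mathcal U$. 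Since $T$ lands in $\mathcal U$, this closed set $\mathcal{M}_{f}$ has dimension at least the expected dimension of $\overline{M}_{g}(X,e)$.

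Now comes the technical heart: the higher-genus circle method in positive characteristic shows that for $X$ general of degree $d$ with $n\ge 4d-6$ and $e$ sufficiently large, $\overline{M}_{g}(X,e)$ is irreducible of the expected dimension $(n+1-d)e+(n-4)(1-g)$ — one spreads $X$ out, reduces modulo a large prime, counts $\F_{q}$-points of the relevant space of maps with a power-saving error term, reads off irreducibility and the dimension via Lang--Weil, and transfers back to $\C$. Then $\mathcal U$ too is irreducible of that dimension, and a closed subset of full dimension is everything, so $\mathcal{M}_{f}=\mathcal U$: every stable map in $\mathcal U$ factors through $f$. This is absurd. Fix a general rational curve $R'\hookrightarrow X$ of large degree and a genus-$g$ curve $\Gamma\subseteq X$ with $\deg R'+\deg\Gamma=e$; for $X$ general with $n\ge 4d-6$ the curves $R'$ form an irreducible family sweeping out $X$ and joining any two of its points, so for general $R'$ the monodromy of this family acts transitively on the fibre $f^{-1}(x)$ over a general $x\in X$, whence $R'\times_{X}Y$ is irreducible — a cover of $\P^{1}$ of degree $\deg f\ge 2$ with no section. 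Thus the stable map obtained by gluing $R'$ to $\Gamma$ at a point lies in $\mathcal U$ but does not factor through $f$, a contradiction. Hence $X$ has no accumulating maps.

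The principal obstacle is the circle-method input of the third paragraph: proving irreducibility and the expected dimension of $\overline{M}_{g}(X,e)$ for all $g$ and all large $e$, while keeping $n$ merely linear in $d$ — a substantial strengthening of the exponential-in-$d$ ranges of \cite{haseliu2024higher} — is exactly what the positive-characteristic, higher-genus refinement of the circle method is designed to supply. A secondary difficulty is the geometric translation carried out in the first step: extracting, from the identity $a(Y,-f^{*}K_{X})=1$, a free dominating family on $Y$ of controlled genus, zero intersection with the ramification divisor, and arbitrarily large degree.
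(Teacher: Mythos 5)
Your proposal has the right flavor — reduce to a dimension statement for spaces of higher‑genus curves supplied by a positive‑characteristic circle method — but it has two structural gaps that the paper's argument is specifically built to close.

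First, your entire argument presupposes that $f$ is dominant: the identity $R=K_{Y}-f^{*}K_{X}$ with $R$ effective is the ramification formula for a dominant generically finite map, and your endgame (every stable map in $\overline{M}_{g}(X,e)$ factors through $f$) only makes sense in that case. But an accumulating map may have image a proper subvariety $V\subsetneq X$, and that is the hard case: there $K_{Y}+L$ need not be effective at all, and one must instead show directly that $a(V,-K_{X}|_{V})<1$ for every proper subvariety. The paper does this by comparing $\dim\Mor(C,V)$ (bounded below by deformation theory plus the movable‑curve duality of \cite{BDPP}) with $\dim\Mor(C,X)$ (bounded above by the circle method), for a carefully constructed curve $C$; the dominant case is then dispatched by Sengupta's theorem on branch divisors plus purity and simple connectedness. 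Relatedly, your reduction from ``very general'' to ``general'' is missing: the circle‑method input must hold for all $g$ and all large $e$, a countable family of conditions, so on its own it only constrains very general $X$. The paper passes back to general $X$ via the BAB conjecture (boundedness of adjoint rigid subvarieties of bounded $-K_{X}$‑degree), which you would need as well.

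Second, several of your intermediate steps are stronger than what is available. BDPP gives movable classes with $R\cdot C<\varepsilon L\cdot C$, not a free dominating family with $R\cdot C=0$ exactly; the paper works with the approximate inequality and compensates by choosing $m$ large. Raising $e$ while keeping $e\gg g$ (the circle method needs $e\ge 27(4d-7)g$) is nontrivial over $\C$ for a fixed movable class of uncontrolled genus; the paper achieves it only after reducing mod $p$, by composing an Artin--Schreier cover with iterates of Frobenius. Finally, the circle method here yields irreducibility and expected dimension for the space of $(n+1)$‑tuples of sections of a \emph{fixed} line bundle on a \emph{fixed} stable curve mapping to the Fermat hypersurface (then to the generic fiber by semicontinuity), not for the full Kontsevich space $\overline{M}_{g}(X,e)$ with varying source curve; your contradiction is phrased in terms of the latter, which is a genuinely stronger and unproven input.
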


\begin{rem}
We do not deal with the cases $d\le4$ because stronger statements,
i.e. for all smooth hypersurfaces, are already known in the literature
(see \cite{Lehmann_Tanimoto_2019} for example). 
\end{rem}

Combining Theorem \ref{thm:smallfujitageneral} with recent results
of Lehmann--Riedl--Tanimoto in \cite{codimjumping}, we obtain the
following corollary.
\begin{thm}
Let $X\subset\P_{\C}^{n}$ be a general or Fermat hypersurface of
degree $d\ge5$ with $n\ge4d-6$ and $C$ a smooth projective genus
$g$ curve. Then, there is some $T>0$ such that the locus of non-free
curves in $\Mor_{e}\left(C,X\right)$ has codimension at least $Te$. 

In particular, $\Mor_{e}\left(C,X\right)$ satisfies a version of
Poincaré duality in a range: Let $\ell$ be a prime number. The natural
map of étale cohomology groups with $\Q_{\ell}$-coefficients
\[
H_{c}^{2N(e)-i}\left(\Mor_{e}\left(C,X\right),\Q_{\ell}\right)\to H_{i}\left(\Mor_{e}\left(C,X\right),\Q_{\ell}\right)\left(-N(e)\right)
\]
is an isomorphism for $i<Te$, where $N(e)=\dim\Mor_{e}\left(C,X\right)=e(n+1-d)+(n-1)(1-g).$
\begin{proof}
The first claim follows immediately from Theorem \ref{thm:smallfujitageneral}
and Theorem 1.8 of \cite{codimjumping}. 

For the second, recall that for a local complete intersection $\pi\colon V\to\Spec k$
over a field $k$ with $\ch(k)\ne\ell$, the étale homology group
$H_{i}\left(V,\Q_{\ell}\right)\coloneqq H_{c}^{-i}\left(V,\pi^{!}\Q_{\ell}\right).$
Suppose $\dim V=m$ and the singular locus has dimension $s$. There
is a natural map $\pi^{*}\Q_{\ell}[m]\to\pi^{!}\Q_{\ell}[-m](-m)$
induced by the trace (see Corollary 7.7 of \cite{KW}). Let $K$ be
the cone of this map in the derived category of $\ell$-adic sheaves.
By Lemma 6.5 of \textit{loc. cit.} and the definition of perversity,
it follows that $\Q_{\ell}[m]$ and $\pi^{!}\Q_{\ell}[-m]=\mathbb{D}\left(\Q_{\ell}[m]\right)$
are perverse. $K$ is then also perverse and is supported on the singular
locus of $V$ (by Poincaré duality applied to the smooth locus). In
particular, $^{p}\mathcal{H}^{*}(\pi_{!}K)$ and hence $\mathcal{H}^{*}(\pi_{!}K)$
is zero for $*>s$. Applying $\pi_{!}$ to the triangle $\Q_{\ell}[m]\to\pi^{!}\Q_{\ell}[-m](-m)\to K\to\cdot$
and taking cohomology, we obtain the long exact sequence 
\[
\to H_{c}^{2m-i}\left(V,\Q_{\ell}\right)\to H_{c}^{-i}\left(V,\pi^{!}\Q_{\ell}(-m)\right)\to H_{c}^{m-i}\left(V,K\right)\to,
\]
so $H_{c}^{2m-i}\left(V,\Q_{\ell}\right)\cong H_{i}\left(V,\Q_{\ell}\right)(-m)$
for $m-i>s.$ The result follows by setting $V=\Mor_{e}\left(C,X\right)$
and using the first claim.
\end{proof}
\end{thm}

\begin{rem}
Completely analogous results hold for the moduli space $\mathcal{M}_{g}(X,e)$
of degree $e$ maps from smooth projective genus $g$ curves to $X$.
\end{rem}

Let us point out where the result of this paper and that of \cite{haseliu2024higher}
differ. In Theorem \ref{thm:smallfujitageneral}, we restrict ourselves
to general or Fermat hypersurfaces whereas \textit{loc. cit.} covers
arbitrary smooth hypersurfaces. On the other hand, the linear lower
bound on $n$ compared to $d$ in Theorem \ref{thm:smallfujitageneral}
is significantly better than that of the exponential lower bound in
\textit{loc. cit}. Moreover, \textit{loc. cit.} only showed the non-existence
of proper subvarieties with larger Fujita invariant, whereas Theorem
\ref{thm:smallfujitageneral} proves the stronger claim that there
are no accumulating maps at all (though this is not much harder to
prove as explained in Section \ref{sec:Reduction-from-general}).
\begin{rem}
Eric Jovinelly, Brian Lehmann, and Eric Riedl informed us that they
have a different strategy to prove a similar result, namely the non-existence
of maps $f\colon Y\to X$ such that $a\left(Y,-f^{*}K_{X}\right)>1$
for $X\subset\P_{\C}^{n}$ a general hypersurface of degree $d\le n-2$.
Note that our main theorem proves non-existence of any accumulating
map to $X$, whereas their result does not currently rule out maps
$f\colon Y\to X$ with $a\left(Y,-f^{*}K_{X}\right)=1$. On the other
hand, our degree range $d\le n/4+3/2$ is more restrictive. 
\end{rem}

To prove Theorem \ref{thm:smallfujitageneral}, we first show that
all proper subvarieties $V$ of a very general hypersurface $X$ have
strictly smaller Fujita invariant $a(V,-K_{X}|_{V})$ using the expected
dimension results for moduli spaces in large characteristic. The strategy
here is a more involved version of that in \cite{haseliu2024higher},
where we argue by contradiction and assume the existence of a proper
subvariety $V$ with large Fujita invariant. After spreading out and
passing to positive characteristic, this allows us to find a smooth
projective curve $C$ such that the space of maps from $C$ to $V$
is larger than the space of maps from $C$ to $X$, which is impossible.
The key point is that we always have lower bounds for the dimension
of the mapping space coming from deformation theory, as well as upper
bounds when the target is a very general hypersurface of low degree. 

Compared to the argument in \textit{loc. cit.}, the spreading out
procedure in the proof of Theorem \ref{thm:smallfujitageneral} is
much more delicate, using an application of the stable reduction theorem.
This requires understanding the moduli of both smooth and certain
stable curves on $X$. 

To extend this to general $X$ and all accumulating maps $Y\to X$,
we use the resolution of the Borisov--Alexeev--Borisov (BAB) conjecture.
\begin{prop}
\label{prop:reduction}Suppose for a very general hypersurface $X\subset\P_{\C}^{n}$
of degree $d\ge5$, it is known that there are no proper subvarieties
$V\subset X$ such that $a(V,-K_{X}|_{V})\ge1$. Then, for a general
hypersurface $X\subset\P_{\C}^{n}$ of degree $d$, there are no accumulating
maps to $X$. 
\end{prop}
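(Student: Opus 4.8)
The plan is to reduce the absence of accumulating maps for a general $X$ to two statements --- one about proper subvarieties and one about dominant finite covers --- and then to spread both out over the parameter space of degree-$d$ hypersurfaces, using the resolution of the BAB conjecture to replace the countable intersection hidden in ``very general'' by a single dense open condition. First, given an accumulating map $f\colon Y\to X$, after resolving I may take $Y$ smooth projective, and $-f^{*}K_{X}$ is big and nef (being the pullback of an ample class along the generically finite surjection onto $\overline{f(Y)}$). If $\dim Y<\dim X$, put $V=\overline{f(Y)}\subsetneq X$; resolving $V$ and the induced rational map produces a generically finite dominant $g\colon Y'\to\widetilde{V}$ (a suitable blow-up of $Y$) through which $f$ factors, and from $K_{Y'}=g^{*}K_{\widetilde V}+R_{g}$ with $R_{g}\ge 0$ one gets that $t\,g^{*}M+K_{Y'}=g^{*}(tM+K_{\widetilde V})+R_{g}$ is pseudo-effective whenever $tM+K_{\widetilde V}$ is; together with the birational invariance of the Fujita invariant this yields $a(V,-K_{X}|_{V})=a(\widetilde V,M)\ge a(Y',g^{*}M)=a(Y,-f^{*}K_{X})\ge 1$. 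If instead $\dim Y=\dim X$, then $f$ is dominant and generically finite of degree $m\ge 2$, and $K_{Y}=f^{*}K_{X}+R$ with $R\ge 0$ the effective ramification divisor, so $t(-f^{*}K_{X})+K_{Y}=(1-t)f^{*}K_{X}+R$ equals $R$ at $t=1$ and is pseudo-effective there, forcing $a(Y,-f^{*}K_{X})=1$. Hence it suffices to show that a general $X$ admits neither (i) a proper subvariety $V\subsetneq X$ with $a(V,-K_{X}|_{V})\ge 1$, nor (ii) a dominant $a$-cover of degree $\ge 2$.

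Next I would spread out over the open locus $U$ in the projective space of degree-$d$ forms parametrizing smooth hypersurfaces, with universal family $\mathcal{X}\to U$. This is where the resolution of the BAB conjecture enters: following the strategy of \cite{Lehmann_Tanimoto_2019,nonfreesections}, boundedness of Fano-type varieties with bounded singularities implies that, as $[X]$ ranges over $U$, the maximal Fujita-large subvarieties occurring in (i) and the minimal (``primitive'') dominant $a$-covers occurring in (ii) fall into finitely many bounded families, with uniformly bounded degrees and --- for the covers --- uniformly bounded branch-divisor degree. Parametrizing these families together with the ways their members sit inside, resp. cover, the fibers of $\mathcal{X}\to U$ by finite-type $U$-schemes, and using constancy of the Fujita invariant in families together with Chevalley's theorem, I conclude that the ``bad loci'' $S_{(\mathrm{i})},S_{(\mathrm{ii})}\subseteq U$ of hypersurfaces admitting such a subvariety, resp. such a cover, are \emph{constructible}.

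Then I would conclude. By hypothesis every very general $X$ lies outside $S_{(\mathrm{i})}$; since $U$ is irreducible over $\C$ and hence is not a countable union of proper closed subvarieties, a constructible subset lying outside the very general locus must be contained in a proper closed subset, so a general $X$ has no subvariety as in (i). For (ii) I would show $S_{(\mathrm{ii})}$ is not dense by a direct argument: a dominant $a$-cover $f\colon Y\to X$ is \'etale in codimension one over the complement of its (bounded-degree) branch divisor $B$, hence is classified by a finite-index subgroup of $\pi_{1}(X\setminus B)$; a Lefschetz/Nori-type statement gives that for general $X$ this group is abelian for all such $B$, so the cover is cyclic, and since $\Pic(X)=\Z$ a cyclic cover branched along a member of $|\mathcal{O}_{X}(e)|$ has ramification numerically proportional to $f^{*}\mathcal{O}_{X}(1)$, whence $a(Y,-f^{*}K_{X})<1$, contradicting (ii). On the dense open where neither (i) nor (ii) occurs, the reduction of the first paragraph shows that $X$ admits no accumulating map.

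The hard part will be the boundedness input: extracting from the BAB conjecture the precise, uniform form of boundedness for the maximal Fujita-large subvarieties and the primitive $a$-covers of a Fano hypersurface of fixed degree that is needed to run the constructibility argument, and checking that detecting \emph{all} accumulating maps genuinely reduces to detecting members of these finitely many families. A secondary obstacle is the fundamental-group/monodromy input used in (ii) to eliminate dominant $a$-covers for general $X$, which is not supplied by the hypothesis and must be established separately.
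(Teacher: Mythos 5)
Your handling of case (i) --- proper subvarieties --- is essentially the paper's argument: reduce to a bounded family of candidate subvarieties via BAB, parametrize them by a finite-type scheme over $U_{d}$, and note that a constructible subset of the irreducible base $U_{d}$ missing a very general point must lie in a proper closed subset. (One technical point you should make explicit: the BAB-type boundedness of Theorem 4.4 of \cite{tanimoto2018geometricaspectsmaninsconjecture} applies to \emph{adjoint rigid} subvarieties, so you first need to replace $V$ by a general fiber of the canonical fibration of $K_{Y}-a(Y,-K_{X}|_{Y})K_{X}$, which by Lemma 4.3 of \cite{geoconsistency} has the same $a$-invariant and Iitaka dimension zero. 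Also, your claim that a dominant generically finite $f$ of degree $\ge 2$ automatically has $a(Y,-f^{*}K_{X})=1$ is not quite right --- the ramification formula only gives $a\le 1$, and $a<1$ is possible --- but this does not affect the structure of your reduction.)

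The genuine gap is in case (ii), dominant covers. Your plan is to bound the branch divisor $B$, prove that $\pi_{1}(X\setminus B)$ is abelian for general $X$ by a ``Lefschetz/Nori-type statement,'' deduce the cover is cyclic, and compute the Fujita invariant of a cyclic cover. The abelianness input is not established anywhere, is not supplied by the hypothesis of the proposition, and is far from routine: you would need it uniformly over all branch divisors of bounded degree inside $X$ (not inside $\P^{n}$), and you would also have to pass to Galois closures since an arbitrary cover corresponds only to a finite-index subgroup. You correctly flag this as an obstacle, but as written the proof does not close. The paper avoids the fundamental group of the open complement entirely: by Theorem 1.3 of \cite{senguptaFujita}, for an adjoint rigid dominant accumulating cover every irreducible component $B$ of the branch locus satisfies $a(B,-K_{X}|_{B})>1$, which is already excluded by the subvariety result from case (i); hence the branch locus is empty in codimension one, Gabber's purity (Tag 0EA4 of \cite{key-2}) upgrades ``\'etale in codimension one'' to ``\'etale,'' and simple connectedness of a smooth hypersurface then contradicts $f$ being non-birational. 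You should replace your fundamental-group argument with this reduction, which needs no new monodromy input and works for every smooth $X$ in the good locus rather than requiring a further genericity condition.
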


We present this reduction from the general to very general case in
Section \ref{sec:Reduction-from-general}.

In \cite{haseliu2024higher}, the expected dimension result we use
comes from adapting the Browning--Vishe circle method strategy from
\cite{BrowningVisheRatCurves} to count tuples of global sections
satisfying the equation of the hypersurface. Many related circle method-based
arguments require the number of variables $n+1$ to be exponentially
large in $d$ because of a step called Weyl differencing, but in some
cases it is possible to remove this restriction. For example, Sawin's
work in \cite{sawinwaring} on the related Waring's problem over function
fields essentially gave an asymptotic for the number of $\F_{q}$-points
on a degree $d$ Fermat hypersurface using a similar circle method
strategy, but used Katz's bounds on exponential sums instead of Weyl
differencing, resulting in a linear lower bound on the number of variables
in terms of the degree $d$.

Let $\P^{{n+d \choose d}-1}$ represent the moduli space of degree
$d$ hypersurfaces in $\P^{n}$ and let $U_{d}\subset\P^{{n+d \choose d}-1}$
be the open locus of smooth hypersurfaces. For a stable curve $C$
with (arithmetic) genus $g$, denote by $\Pic_{C}^{e}$ for $e\ge2g-1$
the Picard stack of degree $e$ line bundles on $C$. Note that $\Pic_{C}^{e}$
is actually a scheme. Let $E\to\Pic_{C}^{e}$ be the vector bundle
whose fibers are $H^{0}\left(C,L\right)$ for $[L]\in\Pic_{C}^{e}$.
Abstractly, this is obtained by taking the universal bundle on $C\times\Pic_{C}^{e}$
and pushing forward to $\Pic_{C}^{e}$. 

Consider $\P\left(E^{n+1}\right)\times U_{d}$. Inside it, we can
consider the incidence subvariety defined by 
\[
V_{d}\coloneqq\left\{ \left(\vec{x},F\right)\in\P\left(E^{n+1}\right)\times U_{d}\colon F\left(\vec{x}\right)=0\right\} .
\]

By modifying Sawin's result in \cite{sawinwaring}, we extract the
following expected dimension statement for the generic fiber of the
family $V_{d}\to U_{d}$. 
\begin{prop}
\label{cor:generalexpdim}Let $C$ be a stable projective curve of
genus $g$ over $\F_{q}$ such that $p\coloneqq\ch\left(\F_{q}\right)>36\frac{(4d-7)d(d-2)}{d-1}$.
Let $d\ge5$ and $n\ge4d-6$ be integers. Suppose the normalizations
of the components of $C$ are $Z_{1},\ldots,Z_{t}$ over $\F_{q}$
and that $L$ is a degree $e$ line bundle on $C$ such that $\deg\left(L|_{Z_{i}}\right)$
is either 0 or at least $27(4d-7)g$. Then, the fiber above the Fermat
and the generic fiber of $V_{d}\to U_{d}$ are irreducible and have
the expected dimension $\left(n+1\right)\left(e-g+1\right)-\left(de-g+1\right)-1+g$. 
\end{prop}

We prove this proposition in Section \ref{sec:Dimension-of-moduli}. 

Next, we apply Proposition \ref{cor:generalexpdim} to prove a cleaner
version of the argument from \cite{haseliu2024higher} that applies
more generally. This should perhaps be viewed as a way to transfer
information about geometric Manin's conjecture in sufficiently positive
characteristic to Fujita invariants (over $\C$). 
\begin{prop}
\label{prop:highergenusstrat}

Let $d\ge5$ and $n\ge4d-6$ be integers. Let $X$ be a Fermat in
$\P_{\C}^{n}$ of degree $d$ or the generic fiber of the universal
family $\mathcal{H}_{d}=\left\{ \left(F,p\right)\in U_{d}\times\P^{n}\colon F(p)=0\right\} \to U_{d}$
of smooth hypersurfaces of degree $d$ in $\P_{\C}^{n}$. Then, $a\left(V,-K_{X}|_{V}\right)<1$
for any proper subvariety $V$ of $X$.
\end{prop}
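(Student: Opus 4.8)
The plan is to argue by contradiction, following the strategy outlined in the introduction. Suppose there exists a proper subvariety $V \subsetneq X$ with $a(V, -K_X|_V) \ge 1$. By passing to a resolution of singularities we may assume $V$ is smooth projective; the Fujita invariant only decreases under such a birational modification on the target side in the relevant direction, so we retain $a(V, -K_X|_V) \ge 1$ (with $-K_X|_V$ replaced by its pullback, which is still big and nef since $X$ is Fano and $V \to X$ is generically finite onto its image, or more carefully we spread out and work with the restriction to the image and its pullback). Let $L = -K_X|_V$. By definition of the Fujita invariant, $a(V,L) \ge 1$ means $[L] + [K_V]$ is \emph{not} pseudo-effective, i.e. $[K_V] + a[L]$ is pseudo-effective only for $a \ge 1$; equivalently $-K_V - L$ meets every curve class nonnegatively on average in a way that forces, via deformation theory, the space of maps from a curve $C$ into $V$ of class $\beta$ to have dimension at least $-K_V \cdot \beta + (1-g)\dim V \ge L \cdot \beta + (1-g)\dim V$.

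Next I would set up the comparison of mapping spaces in positive characteristic. Spread out $X$, $V$, and the inclusion over a finitely generated $\Z$-algebra and reduce mod a large prime $p$ satisfying the bound $p > 36\frac{(4d-7)d(d-2)}{d-1}$ from Corollary \ref{cor:generalexpdim}, so that $X_{\F_q}$ is a very general hypersurface of degree $d$ in $\P^n_{\F_q}$ in the sense needed. Choose a stable (indeed smooth) projective curve $C$ of sufficiently large genus $g$ over $\F_q$, and a line bundle $L_C$ on $V_{\F_q}$ that is a high multiple of an ample class so that the corresponding map class $\beta$ has degree $e$ with respect to $-K_X$ large and divisible, with $e \ge 2g-1$ and $\deg(L_C|_{Z_i})$ either $0$ or at least $27(4d-7)g$ on each component (here $C$ is irreducible so there is a single $Z_1 = C$). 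On the $V$ side, $\dim \mathrm{Mor}_\beta(C, V) \ge -K_X|_V \cdot \beta + (1-g)\dim V$ by the standard deformation-theoretic lower bound, combined with $a(V,-K_X|_V)\ge 1$ to write this as at least (roughly) the "expected dimension for $X$" computation but with the smaller $\dim V$. On the $X$ side, Corollary \ref{cor:generalexpdim} gives that the space of degree-$e$ maps $C \to X$ (parametrized via $\P(E^{n+1})$ as tuples of sections satisfying $F(\vec x) = 0$) is irreducible of the \emph{expected} dimension $(n+1)(e-g+1) - (de - g+1) - 1 + g$, which equals $-K_X \cdot \beta + (1-g)\dim X$ up to matching the normalizations. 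Since maps factoring through $V$ form a subvariety of maps to $X$, we get $\dim \mathrm{Mor}_\beta(C,V) \le \dim \mathrm{Mor}_\beta(C,X)$.

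The contradiction then comes from comparing the two dimension estimates: the lower bound on the $V$ side grows like $-K_X|_V \cdot \beta$, which by the hypothesis $a(V,-K_X|_V)\ge 1$ is at least $-K_V \cdot \beta$ pseudo-effectively dominated... more precisely, $a(V,L)\ge 1$ forces $-K_V - L$ to be non-pseudo-effective, hence intersecting the movable class $\beta$ (which we may take to be a high power of an ample class moving in a family covering $V$) negatively, giving $K_V \cdot \beta \le -L\cdot\beta = K_X|_V \cdot \beta$, i.e. the lower bound $\dim\mathrm{Mor}_\beta(C,V) \ge -K_V\cdot\beta + (1-g)\dim V \ge -K_X|_V\cdot\beta + (1-g)\dim V$. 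Comparing with the \emph{exact} dimension $-K_X\cdot\beta + (1-g)\dim X$ on the $X$ side, and noting $-K_X|_V \cdot \beta = -K_X\cdot\beta$ while $\dim V < \dim X$ forces, once $\beta$ is scaled so that $(1-g)$ dominates, $\dim\mathrm{Mor}_\beta(C,V) > \dim\mathrm{Mor}_\beta(C,X)$ — wait, the sign of $(1-g)$ works in our favor precisely because $g$ is large, so $(1-g)\dim V > (1-g)\dim X$: this is the desired strict inequality, contradicting the containment of mapping spaces. I expect the main obstacle to be bookkeeping: ensuring the chosen $C$, $g$, and $\beta$ simultaneously satisfy the divisibility and lower-bound congruence constraints of Corollary \ref{cor:generalexpdim}, that the lower-semicontinuity argument for passing between $\C$ and $\F_q$ genuinely preserves the "very general" hypothesis needed (this is where one must be careful that reduction mod $p$ of a very general complex hypersurface can be taken to avoid the bad locus in Corollary \ref{cor:generalexpdim}), and translating the pseudo-effectivity hypothesis on $-K_V - L$ into an honest numerical inequality against the specific curve class $\beta$ — which requires choosing $\beta$ inside the movable cone, e.g. as a complete intersection curve of very ample divisors, so that $(-K_V - L)\cdot\beta < 0$.
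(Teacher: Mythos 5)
Your high-level strategy coincides with the paper's (contradiction, spreading out to large characteristic, deformation-theoretic lower bound for $\dim\Mor(C,V)$ versus the upper bound from Corollary \ref{cor:generalexpdim} for $\dim\Mor(C,X)$, with the gain coming from $(g-1)(\dim X-\dim V)>0$), but two essential steps are missing and the argument does not close as written. The first is the boundary case $a(V,-K_X|_V)=1$: since the Fujita invariant is a minimum over a closed condition, $a(V,L)\ge 1$ does \emph{not} imply that $K_V+L$ fails to be pseudo-effective --- at $a=1$ it lies on the boundary of the pseudo-effective cone. One only gets that $K_V+(1-\tfrac1m)L$ is non-pseudo-effective for each $m$, and BDPP duality then produces a \emph{movable} curve class $\beta_m$ (depending on $m$, through a general point) with $(-K_V+L-\tfrac1m L)\cdot\beta_m>0$. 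Note also that a non-pseudo-effective class need not pair negatively with \emph{every} movable class, only with some, so you cannot take $\beta$ to be an arbitrary complete intersection of very ample divisors; you are stuck with the curve duality hands you, whose genus and degree you do not control. Your comparison therefore carries an error term $\tfrac1m L\cdot\beta$ that must be absorbed by the gain $g-1$.

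The second gap is the technical heart of the paper's proof and is entirely absent from yours: the two requirements on the curve pull in opposite directions. Corollary \ref{cor:generalexpdim} requires the degree to be at least $27(4d-7)$ times the genus, while absorbing the error term requires the degree to be less than roughly $m(g-1)$. So the ratio of degree to genus must land in the window $[27(4d-7),m)$, which is why the paper fixes $m>A=27(4d-7)$ and then replaces the BDPP curve by an Artin--Schreier cover composed with $b$ iterates of Frobenius, tuning the ramification order $m_x$ and the exponent $b$ so that Riemann--Hurwitz places the pair $(ep^{b+1},g(C'))$ inside this window; a stable reduction and semicontinuity argument then transports the Fermat-hypersurface dimension count of Corollary \ref{cor:generalexpdim} to the relevant generic fiber. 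Without some such construction, a curve of ``sufficiently large genus and sufficiently divisible degree'' cannot simultaneously satisfy the hypotheses of the corollary and yield the strict inequality, so the contradiction is never reached.
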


We prove this proposition in Section \ref{sec:From-geometric-Manin's-1}. 

Theorem \ref{thm:smallfujitageneral} now follows easily:
\begin{proof}
[Proof of Theorem \ref{thm:smallfujitageneral}] Combine Propositions
\ref{prop:highergenusstrat} and \ref{prop:reduction}.
\end{proof}
\begin{rem}
It should be straightforward to adapt the argument of Proposition
\ref{prop:highergenusstrat} to other families of smooth Fano varieties
$\mathcal{H}\to U$ over $\C$. Moreover, the spreading out procedure
and passage to positive characteristic is only crucially needed when
handling accumulating maps $f\colon Y\to X$ for which $a\left(Y,-f^{*}K_{X}\right)=1$;
this strategy simplifies substantially when $a\left(Y,-f^{*}K_{X}\right)>1$.%
\begin{comment}
In particular, suppose we are able to spread out to a family $\mathcal{F}\to\mathcal{W}$
defined over a finitely-generated $\Z$-algebra and integral domain
$\Lambda\subset\C$, i.e. the generic fiber of $\mathcal{F}\to\mathcal{W}$
base changed to $\C$ is the original family $\mathcal{H}\to U$.
If there exists a constant $A$ such that for all $\F_{q}$ with $\ch\left(\F_{q}\right)$
sufficiently large compared to some fixed data attached to the family,
there exists a Fano variety $X_{q}\in\mathcal{W}\otimes_{\Lambda}\F_{q}$
such that the mapping spaces of degree $e\ge b+Ag$ maps from smooth
projective curves of genus $g$ with $b$ marked points/congruence
conditions (counted with multiplicity depending on the degrees of
residue extensions of the closed points) to $X_{q}$ have the expected
dimension coming from deformation theory, then the same conclusion
of Proposition \ref{prop:highergenusstrat} holds: $X$ has no proper
subvarieties $V$ with $a\left(V,-K_{X}|_{V}\right)\ge1$. 
\end{comment}
{} 
\end{rem}

\begin{rem}
Recently, Browning, Vishe, and Yamagishi in \cite{browning2024rationalcurvescompleteintersections}
developed a version of Myerson's circle method over function fields
to deduce that moduli spaces of rational curves on low degree smooth
complete intersections with marked points are irreducible of the expected
dimension. Developing a higher genus variant of this strategy (which
seems very plausible) should yield similar results to our main theorem
for complete intersections. %
\begin{comment}
More geometrically, Beheshti, Lehmann, Riedl, and Tanimoto in \cite{delpezzopositive}
studied geometric Manin's conjecture for weak del Pezzo surfaces in
positive characteristic. Understanding the behavior of families of
higher genus curves on weak del Pezzo surfaces in positive characteristic
could potentially be combined with the arithmetic strategy of this
paper to better understand accumulating maps to del Pezzo surfaces
over $\C$, which Okamura has investigated in \cite{okamura} using
more geometric methods.
\end{comment}
\end{rem}

\begin{ack} I am grateful to my advisor Will Sawin, as well as to
Brian Lehmann, Eric Riedl, and Sho Tanimoto, for helpful comments
and suggestions. Will identified several errors and proposed streamlined
arguments; Will and Eric suggested extending the results from very
general to general hypersurfaces; and Brian and Sho explained how
to do this using the resolution of the BAB conjecture. The argument
in Section \ref{sec:Reduction-from-general} is due to Brian and Sho.

Finally, I was partially supported by National Science Foundation
Grant Number DGE-2036197.\end{ack}

\section{Reduction from general hypersurfaces to very general hypersurfaces\label{sec:Reduction-from-general}}

In this section, we prove Proposition \ref{prop:reduction}. The argument
below is due to Brian Lehmann and Sho Tanimoto. 
\begin{proof}
[Proof of Proposition \ref{prop:reduction}]

Let $X$ be a general hypersurface in $\P_{\C}^{n}$ of degree $d$.
Suppose there is a proper subvariety $Y\subset X$ with $a(Y,-K_{X}|_{Y})\ge1$.
Then, we may assume $Y$ is adjoint rigid, i.e. after replacing $Y$
with a suitable resolution of singularities, $K_{Y}-a(Y,-K_{X}|_{Y})K_{X}$
has Iitaka dimension zero, since Lemma 4.3 of \cite{geoconsistency}
shows that the canonical fibration associated to $K_{Y}-a(Y,-K_{X}|_{Y})K_{X}$
has general fiber $F$ satisfying $a(F,-K_{X}|_{F})=a(Y,-K_{X}|_{Y})$
and has corresponding Iitaka dimension zero. 

Next, Theorem 4.4 of \cite{tanimoto2018geometricaspectsmaninsconjecture}
tells us the resolution of the BAB conjecture by Birkar (see \cite{Bir1,Bir2})
implies that if $Y$ is an adjoint rigid subvariety of $X$ with $a(Y,-K_{X}|_{Y})\ge1$,
then the $-K_{X}$-degree of $Y$ is bounded by a uniform constant.
So for the universal family $\mathcal{H}_{d}\to U_{d}$ of smooth
Fano hypersurfaces in $\P_{\C}^{n}$ of degree $d$, there is a finite
type subscheme parametrizing $Y$ in the relative Hilbert scheme $\Hilb$$\left(\mathcal{H}_{d}/U_{d}\right)$. 

The image of such families in $U_{d}$ is a constructible set, say
$S$. By assumption, since we know the claim about proper subvarieties
of very general hypersurfaces in $\P_{\C}^{n}$ of degree $d$ holds,
it follows that a very general point of $U_{d}$ is not contained
in $S$. 

This implies that $S$ is contained in a proper closed subset, which
is equivalent to saying that for a general hypersurface $X$, all
proper subvarieties have smaller Fujita invariant.

To extend this from subvarieties to accumulating maps, suppose for
contradiction that we have an accumulating map $f\colon Y\to X$. 

If $\dim Y<\dim X$, then $a(Y,-f^{*}K_{X})\le a\left(f(Y),-K_{X}|_{f(Y)}\right)$
by Lemma 2.4 of \cite{senguptaFujita}, which is moreover less than
one by the result for proper subvarieties. So we may assume $f$ is
dominant. 

Arguing as we did before, we may assume $Y$ is adjoint rigid. But
Theorem 1.3 of \cite{senguptaFujita} shows that for such a map $f$,
any component $B$ of the branch locus of the map satisfies $a(B,-K_{X}|_{B})>1$,
which is impossible again by the result for proper subvarieties. By
Tag 0EA4 of \cite{key-2}, the branch locus is necessarily a divisor,
so this implies that $f$ is étale in codimension one. Restricting
$f$ then gives a finite étale cover in codimension one, which by
Grothendieck's version of Zariski--Nagata purity (see Tag 0BMA of
\cite{key-2}) extends to a non-trivial finite étale cover of $X$.
On the other hand, $X$ is a smooth hypersurface in $\P_{\C}^{n}$,
so it is simply connected. This means that the étale fundamental group
of $X$ is actually  trivial, giving a contradiction.
\end{proof}

\section{Dimension of moduli spaces of curves on very general hypersurfaces\label{sec:Dimension-of-moduli}}

In this section, we prove Proposition \ref{cor:generalexpdim}. 

In \cite{sawinwaring}, Sawin obtained an asymptotic for Waring's
problem in the function field setting for any smooth projective curve
over a finite field of sufficiently large characteristic. By modifying
the proofs in \textit{loc. cit.} to include congruence conditions,
we obtain the following generalization that is uniform in $q$.
\begin{thm}
\label{thm:Willgeneralization}Let $C$ be a smooth projective curve
of genus $g$ over $\F_{q}$, $d\ge5$ and $n\ge4d-6$ integers, $B$
an effective divisor on $C$ of degree $b$, $\vec{P}$ a fixed tuple
in $H^{0}\left(B,L|_{B}\right)^{n+1}$, $L$ a degree $e\ge b+27(4d-7)g$
line bundle, $F=x_{0}^{d}+\cdots+x_{n}^{d}$, and $p\coloneqq\ch\left(\F_{q}\right)>36\frac{(4d-7)d(d-2)}{d-1}$.
Then, 
\[
\frac{\#\left\{ \vec{x}\in H^{0}\left(C,L\right)^{n+1}\colon F\left(\vec{x}\right)=0,\vec{x}|_{B}=\vec{P}\right\} }{q^{e\left(n+1-d\right)+n(1-g)-bn}}\to1
\]
 as $q\to\infty$. 
\end{thm}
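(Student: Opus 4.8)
The statement is precisely Sawin's function-field Waring asymptotic \cite{sawinwaring} (the case $B=0$), enriched by the congruence condition $\vec{x}|_B=\vec{P}$ and made effective in $q$. The plan is therefore to re-run the circle method of \cite{sawinwaring}, indicating the modifications forced by the congruence and by the need for an explicit error term. We may assume $F(\vec{P})=0$ in $H^{0}(B,L^{\otimes d}|_B)$, since otherwise the left-hand count vanishes and the condition $\vec{x}|_B=\vec{P}$ is not locally solvable. Fix a lift $\widetilde{F(\vec{P})}\in H^{0}(C,L^{\otimes d})$. Then $\{\vec{x}:F(\vec{x})=0,\ \vec{x}|_B=\vec{P}\}$ is the set of $\vec{x}$ in a fixed torsor $T$ under $H^{0}(C,L(-B))^{n+1}$ satisfying $F(\vec{x})-\widetilde{F(\vec{P})}=0$ in $H^{0}(C,L^{\otimes d}(-B))$. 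Since $e-b\ge 27(4d-7)g\ge 2g-1$ and $de-b\ge 2g-1$, Riemann--Roch gives $\dim T=(n+1)(e-b-g+1)$ and $\dim H^{0}(C,L^{\otimes d}(-B))=de-b-g+1$, so the naive heuristic predicts $q^{(n+1)(e-b-g+1)-(de-b-g+1)}=q^{e(n+1-d)+n(1-g)-bn}$ solutions, exactly the normalizing factor; the theorem is the assertion that the circle method delivers this count with a lower-order error.

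Expanding the indicator of $F(\vec{x})-\widetilde{F(\vec{P})}=0$ over the additive characters of the finite $\F_q$-vector space $H^{0}(C,L^{\otimes d}(-B))$ writes the count as $q^{-(de-b-g+1)}\sum_{\psi}S(\psi)$ with $S(\psi)=\sum_{\vec{x}\in T}\psi\bigl(F(\vec{x})-\widetilde{F(\vec{P})}\bigr)$. By Serre duality these characters are the elements of $H^{1}(C,K_C\otimes L^{-d}(B))$, i.e. rational sections of $K_C\otimes L^{-d}(B)$ modulo regular ones, each carrying a conductor $\mathfrak{d}$ (an effective divisor recording its poles); this is the function-field ``circle''. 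Because $F=x_0^d+\cdots+x_n^d$ is diagonal, $T$ is a translate of $H^{0}(C,L(-B))^{n+1}$, and $\deg L(-B)=e-b$ is large (here $e-b\ge 27(4d-7)g$ is used, to make the restriction maps to the local quotients at $\Supp\mathfrak{d}$ surjective with constant fibers), the sum $S(\psi)$ factors, up to an explicit power of $q$, into a product over the closed points $v\mid\mathfrak{d}$ of local exponential sums of $d$-th powers; at the finitely many $v\in\Supp B$ the congruence restricts that local summation to a coset and twists it by $\psi$ evaluated on the corresponding jet of $\vec{P}$. One then splits $\sum_\psi$ into major arcs (conductor of bounded degree) and minor arcs.

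For the major arcs, the trivial character contributes exactly the normalizing factor above, and the nontrivial small-conductor characters assemble into a singular series $\mathfrak{S}$ --- an Euler product over the closed points of $C$ --- times a singular integral. One must show $\mathfrak{S}$ converges absolutely and that the major-arc total is $(1+o(1))$ times the main term; absolute convergence is exactly where $n\ge 4d-6$ enters, each local factor differing from $1$ by a quantity that the diagonal structure bounds by $O(q_v^{-c})$ with $c$ growing linearly in $n+1-d$. The congruence at $\Supp B$ alters only finitely many local factors of $\mathfrak{S}$, by a nonzero amount equal to the local density of the imposed jet condition, which is already incorporated in the normalization and so affects neither convergence nor the value of the main term.

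The minor arcs are the heart of the matter: one must bound $q^{-(de-b-g+1)}\sum_{\psi\text{ minor}}|S(\psi)|$ by $q^{-\delta}$ times the main term. Instead of applying Weyl differencing to the forms $x_i^d$ --- which would force $n$ to be exponential in $d$ --- one uses the diagonal structure to write $|S(\psi)|$ as a product of complete exponential sums of the $d$-th power map over the local rings of $C$, and invokes the square-root-cancellation bounds of Katz and Deligne for such sums; their hypotheses hold precisely because $p=\ch(\F_q)>36\frac{(4d-7)d(d-2)}{d-1}$, which guarantees that the relevant Kummer and Artin--Schreier sheaves (including the higher-jet contributions) have the expected $\ell$-adic monodromy and suffer no unexpected loss of cancellation. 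Summing these bounds over all minor-arc $\psi$, organized by conductor and controlled using $e-b\ge 27(4d-7)g$, produces a saving of a fixed positive power of $q$ as soon as $n\ge 4d-6$; since every bound is explicit in $q$, the resulting asymptotic $1+O_{d,n,g}(q^{-\delta})$ is uniform in $q$ and in $C,B,L,\vec{P}$. \textbf{The main obstacle is this step}: checking that imposing congruence conditions at $\Supp B$ --- which twists finitely many local sums by unitary characters and restricts their ranges to cosets --- neither destroys the square-root cancellation of the local sums nor introduces new positive-dimensional ``bad'' strata in the point counts underlying Katz's estimates, and that the minor-arc sum still converges with room to spare in the linear range $n\ge 4d-6$ rather than the exponential range of the classical method.
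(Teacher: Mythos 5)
Your overall architecture matches the paper's: the count is rewritten as a sum over the ``circle'' $H^{0}\left(C,L^{\otimes d}(-B)\right)^{\vee}$ of products of exponential sums over the coset $\vec{P}+H^{0}\left(C,L(-B)\right)^{n+1}$, the normalization is identified correctly, the major arcs are handled as in earlier work, and the constraints $n\ge 4d-6$, $e-b\ge 27(4d-7)g$, and $p>36\frac{(4d-7)d(d-2)}{d-1}$ enter in the minor-arc bookkeeping. The congruence condition is indeed the mild modification you describe.

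However, your minor-arc mechanism is not the one that works, and the step you propose would fail. You claim that, because $F$ is diagonal, $S(\psi)$ ``factors, up to an explicit power of $q$, into a product over the closed points $v\mid\mathfrak{d}$ of local exponential sums of $d$-th powers,'' justified by surjectivity of the restriction map to the local quotients at $\Supp\mathfrak{d}$. That surjectivity (and hence the factorization) holds only when $\deg\mathfrak{d}$ is small relative to $e-b$ --- i.e.\ precisely on the major arcs. On the minor arcs the conductor can have degree up to roughly $(de-b)/2+1$, vastly exceeding $\deg L(-B)=e-b$, so $H^{0}\left(C,L(-B)\right)$ maps to a small subspace of the local quotient and the sum is an \emph{incomplete} local sum; pointwise square-root cancellation for the complete local $d$-th power sums then gives nothing. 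This is exactly the obstruction that classically forces Weyl differencing (and hence $n$ exponential in $d$). The argument the paper actually uses (following Sawin) is different: each $S_{1}(\alpha)_{i}$ is treated as a \emph{single global} complete exponential sum of a degree-$d$ polynomial on the affine space $H^{0}\left(C,L(-B)\right)$ of dimension $e-b-g+1$, and Katz's bound gives $\left|S_{1}(\alpha)_{i}\right|\le 3(d+1)^{e-b-g+1}q^{\frac{e-b+g-1+\dim\Sing_{\alpha}}{2}}$, where $\Sing_{\alpha}=\left\{a\colon\alpha\left(a^{d-1}b\right)=0\ \forall b\right\}$ is the singular locus of the leading form $\alpha\left(a^{d}\right)$. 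The real work --- entirely absent from your proposal --- is bounding $\dim\Sing_{\alpha}$ for $\alpha$ of large conductor, which Sawin does by a stratification of sections according to root multiplicities and tangent-space estimates; this is where $\gamma_{d,p}$ and the lower bound on $p$ come from. Without this global singular-locus bound (or some substitute), the minor-arc sum cannot be closed in the linear range $n\ge 4d-6$.
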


\begin{rem}
It is possible to improve the lower bounds on $e$ and $p$, but doing
so is not relevant for applications to the Fujita invariant. 
\end{rem}

\begin{proof}
[Proof sketch] Let us briefly recall the overall outline of Sawin's
argument in \cite{sawinwaring}, which begins using the usual approach
via the circle method by expressing the counting question as a sum
of exponential sums and partitioning these into major and minor arcs.
His treatment of major arcs is similar to that of previous work, such
as in \cite{BrowningSawinFree,BrowningVisheRatCurves,haseliu2024higher},
but his strategy for the minor arcs involves viewing them as complete
exponential sums over finite fields and then applying Katz's bounds
in terms of the dimensions of certain singular loci. Sawin bounds
the singular loci by expressing them in terms of pairs $(a,c)$ of
sections of line bundles, stratifying them according to the multiplicities
of their roots, and then bounding the strata by the dimensions of
their tangent spaces. 

Adding congruence conditions---imposing that the tuples $\vec{x}\in H^{0}\left(C,L\right)^{n+1}$
are determined when restricted to an effective divisor $B\subset C$---requires
only a small tweaking to \cite{sawinwaring}, but it does not seem
possible to conclude Theorem \ref{thm:Willgeneralization} directly
from Theorem 3.15 of \textit{loc. cit}. In particular, a tuple $\vec{x}\in H^{0}\left(C,L\right)^{n+1}$
satisfying $\vec{x}|_{B}=\vec{P}$ is an element of $\vec{P}+H^{0}\left(C,L(-B)\right)^{n+1}$,
where by abuse of notation we pick $\vec{P}$ to be some lift to $H^{0}\left(C,L\right)^{n+1}$.
Then, instead of requiring that $\deg L\ge2g-1$ as in \textit{loc.
cit.}, we require $\deg L(-B)\ge2g-1$, which gives the condition
$e\ge b+2g-1$. In particular, this implies that $L(-B)$ is a non-special
line bundle. 

To express $\#\left\{ \vec{x}\in H^{0}\left(C,L\right)^{n+1}\colon F\left(\vec{x}\right)=0,\vec{x}|_{B}=\vec{P}\right\} $
as a sum of exponential sums, we express the counting problem as a
sum over the ``circle'' $H^{0}\left(C,L^{\otimes d}(-B)\right)^{\vee}$
(instead of $H^{0}\left(C,L^{\otimes d}\right)^{\vee}$ in \textit{loc.
cit.}), where in \textit{loc. cit.} Sawin writes $k$ for $d$ and
$s$ for $n+1$. We define 
\[
S_{1}(\alpha)_{i}=\sum_{a\in H^{0}\left(C,L(-B)\right)}\psi\left(\alpha\left(\left(a+P_{i}\right)^{d}\right)\right),
\]
where $\psi$ is some-trivial additive character $\F_{q}\to\C^{\times}$.

Lemma 2.1 of \textit{loc. cit.} (which we only need for $f=0$) should
be replaced in our setting with 
\[
\#\left\{ \vec{x}\in\vec{P}+H^{0}\left(C,L(-B)\right)^{n+1}\colon F\left(\vec{x}\right)=0\right\} =\frac{1}{q^{de+1-g-b}}\sum_{\alpha\in H^{0}\left(C,L^{\otimes d}(-B)\right)^{\vee}}S_{1}(\alpha)_{0}\cdots S_{1}(\alpha)_{n}.
\]
We say $\alpha\in H^{0}\left(C,L^{\otimes d}(-B)\right)^{\vee}$ factors
through an effective divisor $Z\subset C$ or $\alpha\sim Z$ more
concisely if the map $H^{0}\left(C,L^{\otimes d}(-B)\right)\to\F_{q}$
factors through the restriction $H^{0}\left(C,L^{\otimes d}(-B)\right)\to H^{0}\left(Z,L^{\otimes d}(-B)\right)$.
Let $\deg\alpha$ be the minimum integer $\deg Z$ such that $\alpha\sim Z$.
Lemma 9 of \cite{haseliu2024higher} ensures that $\deg\alpha\le(de-b)/2+1$.
When $\deg\alpha\le e-b-2g+1$, we say $\alpha$ is a \textit{major}
arc; else, $\alpha$ is a \textit{minor} arc.

Since we only need to understand the leading term of the asymptotic
uniformly in $q$ but not in $e$ (note that \cite{sawinwaring} addresses
uniformity in $e$ as well), we use a slightly simpler (but essentially
the same kind of) argument to handle the major arcs as described in
Section 4 of \cite{haseliu2024higher}. 

To show 
\[
\frac{\#\left\{ \vec{x}\in H^{0}\left(C,L\right)^{n+1}\colon F\left(\vec{x}\right)=0,\vec{x}|_{B}=\vec{P}\right\} }{q^{e\left(n+1-d\right)+n(1-g)-bn}}\to1,
\]
it suffices to show 
\[
\frac{1}{q^{(e-g+1-b)(n+1)}}\sum_{\alpha\in H^{0}\left(C,L^{\otimes d}(-B)\right)^{\vee}}S_{1}(\alpha)_{0}\cdots S_{1}(\alpha)_{n}\to1.
\]
Replacing $e$ with $e-b$ in Lemma 11, combined with Corollary 12,
Lemma 13, and Corollary 14 of \cite{haseliu2024higher} gives the
major arc contribution: 
\[
\frac{1}{q^{(e-g+1-b)(n+1)}}\sum_{\deg\alpha\le e-b-2g+1}S_{1}(\alpha)_{0}\cdots S_{1}(\alpha)_{n}\to\frac{S_{1}(0)_{0}\cdots S_{1}(0)_{n}}{q^{(e-g+1-b)(n+1)}}=\frac{q^{(n+1)(e-g+1-b)}}{q^{(e-g+1-b)(n+1)}}=1.
\]

It remains to verify the minor arc contribution, namely that 

\begin{equation}
\frac{1}{q^{(e-g+1-b)(n+1)}}\sum_{\deg\alpha>e-b-2g+1}S_{1}(\alpha)_{0}\cdots S_{1}(\alpha)_{n}\to0.\label{eq:minorarc}
\end{equation}
Returning to Sawin's argument, let us define the relevant singular
locus that we wish to bound. The idea is to view $H^{0}\left(C,L(-B)\right)$
as an affine space---its $R$-points are $H^{0}\left(C,L(-B)\right)\otimes_{\F_{q}}R$
for any $\F_{q}$-algebra $R$. Fixing $\alpha\in H^{0}\left(C,L^{\otimes d}(-B)\right)^{\vee}$,
let 
\[
\Sing_{\alpha}=\left\{ a\in H^{0}\left(C,L(-B)\right)\colon\alpha\left(a^{d-1}b\right)\text{ for all }b\in H^{0}\left(C,L(-B)\right)\right\} .
\]
Note that we are viewing $a^{d-1}$ as an element of $H^{0}\left(C,L^{\otimes d}\right)$
using the inclusion $H^{0}\left(C,L(-B)^{\otimes d}\right)\hookrightarrow H^{0}\left(C,L^{\otimes d}\right)$.

Lemma 3.1 of \cite{sawinwaring} should be replaced with the claim
that for any $i$,
\[
\left|S_{1}(\alpha)_{i}\right|\le3\left(d+1\right)^{e-b-g+1}q^{\frac{e-b+g-1+\dim\Sing_{\alpha}}{2}}.
\]
To see this, $X$ should be replaced with $\P^{e-b+1-g}$ with variables
$X_{0},\ldots,X_{e-b-g}$ defined by a single linear form inside $\P^{e-b+2-g}$,
$L$ should be the linear form $\{X_{0}=0\}$ so that the complement
is an affine space identified with $H^{0}\left(C,L(-B)\right)$, and
$H$ is the homogenization with respect to $X_{0}$ of the polynomial
function $\alpha\left((a+P_{i})^{d}\right)$ on $H^{0}\left(C,L(-B)\right)$.
Then, the singular locus in consideration is that of $X\cap L\cap H$,
which corresponds to the hypersurface in $\P^{e-b-g}$ defined by
the leading term of $\alpha\left((a+P_{i})^{d}\right)$, which is
$\alpha\left(a^{d}\right)$. The rest of the proof of Lemma 3.1 of
\textit{loc. cit.} then goes through identically.

Denote by $\overline{\alpha}\in H^{0}\left(Z,L^{\otimes d}(-B)\right)^{\vee}$
the restriction of $\alpha$ to $Z$. Lemma 3.2 of \textit{loc. cit.}
is the same except $L^{k}$ should be replaced with $L^{\otimes d}(-B)$,
and the proof is identical: Each $\overline{\alpha}$ has a unique
$\widetilde{\alpha}\in H^{0}\left(Z,K_{C}(Z)\otimes L^{\otimes-d}(B)\right)$
such that $\overline{\alpha}(f)$ is given by the sum over points
$v\in Z$ of the residue of $f\widetilde{\alpha}\in H^{0}\left(Z,K_{C}(-Z)\right)$
at $v$. 

Noting that the bounds for $|S_{1}(\alpha)_{i}|$ are independent
of $i$, Lemma 3.14 of \textit{loc. cit.}, which combines the previous
lemmas, as well as Lemmas 3.3 through 3.13 of \textit{loc. cit.} applied
to $L^{k}$ replaced with $L^{\otimes d}(-B)$ and $L$ replaced with
$L(-B)$, to bound the minor arc contribution, should then be replaced
with the statement that for all $0<\delta<\frac{\frac{n+1}{2}-d}{d-1}$,
we have 
\begin{align}
 & \sum_{\deg\alpha>e-b-2g+1}|S_{1}(\alpha)_{0}\cdots S_{1}(\alpha)_{n}|\nonumber \\
 & \le dq^{(d+1)(e-b)+2-2g}3^{n-1}\left(d+1\right)^{(n-1)(e-b+1-g)}q^{(n-1)\frac{e-b+2+\left(e-b+\frac{2\max(2g-1,0)}{d-2}\right)\gamma_{d,p}}{2}}\nonumber \\
 & \qquad+O_{n,d,\delta}\left((d+1)^{(n+1)(e-b+1-g)}q^{(n+1)(e-b+1-g)}\left(1+q^{-1/2}\right)^{O_{d}(g)}q^{-\delta(e-b-2g+2)}\right),\label{eq:replacement3.14}
\end{align}
where $\gamma_{d,p}$ is a certain number defined in \textit{loc.
cit.}, whose only property we will use is that 
\begin{equation}
\frac{d-2}{2d-2}\le\gamma_{d,p}\le\frac{d-2}{2d-2}\left(1+\frac{d}{p}\right)\le\frac{d-2}{d-1}\label{eq:boundsongamma}
\end{equation}
by Lemmas 3.8 and 3.9 of \textit{loc. cit.}

Recall that it remains to verify the minor arc contribution (\ref{eq:minorarc})
\[
\frac{1}{q^{(e-g+1-b)(n+1)}}\sum_{\deg\alpha>e-b-2g+1}S_{1}(\alpha)_{0}\cdots S_{1}(\alpha)_{n}\to0.
\]
Using our replacement (\ref{eq:replacement3.14}) of Lemma 3.14 of
\textit{loc. cit.} and dropping the terms that do not depend on $q$,
we obtain 
\begin{align*}
 & \frac{1}{q^{(e-g+1-b)(n+1)}}\sum_{\deg\alpha>e-b-2g+1}|S_{1}(\alpha)_{0}\cdots S_{1}(\alpha)_{n}|\\
 & \ll_{q}\frac{q^{(d+1)(e-b)+2-2g+(n-1)\frac{e-b+2+\left(e-b+\frac{2\max(2g-1,0)}{d-2}\right)\gamma_{d,p}}{2}}+q^{(n+1)(e-b+1-g)}\left(1+q^{-1/2}\right)^{O_{d}(g)}q^{-\delta(e-b-2g+2)}}{q^{(e-b-g+1)(n+1)}}\\
 & \ll_{q}q^{(d+1)(e-b)+2-2g+(n-1)\frac{e-b+2+\left(e-b+\frac{2\max(2g-1,0)}{d-2}\right)\gamma_{d,p}}{2}-(e-b-g+1)(n+1)},
\end{align*}
so it remains to verify that 
\[
(d+1)(e-b)+2-2g+(n-1)\frac{e-b+2+\left(e-b+\frac{2\max(2g-1,0)}{d-2}\right)\gamma_{d,p}}{2}-(e-b-g+1)(n+1)<0.
\]

Rearranging, this is the same as checking that 
\begin{equation}
n>\frac{(2d-1-\gamma_{p,d})(e-b)-2g-\frac{2\gamma_{p,d}\max\left(2g-1,0\right)}{d-2}}{(1-\gamma_{p,d})(e-b)-2g-\frac{2\gamma_{p,d}\max\left(2g-1,0\right)}{d-2}}.\label{eq:nbound}
\end{equation}
For $p>36\frac{(4d-7)d(d-2)}{d-1}$, using (\ref{eq:boundsongamma}),
it is easy to verify that $2d-5-(4d-7)\gamma_{p,d}>1/9$, so 
\[
e-b\ge27(4d-7)g=\frac{(4d-7)\left(2g+2\frac{2g}{d-1}\right)}{1/9}>\frac{(4d-7)\left(2g+2\frac{\max\left(2g-1,0\right)}{d-2}\gamma_{d,p}\right)}{2d-5-(4d-7)\gamma_{p,d}},
\]
which after plugging into the right-hand side of (\ref{eq:nbound})
gives
\[
n\ge4d-6>\frac{(2d-1-\gamma_{p,d})(e-b)-2g-\frac{2\gamma_{p,d}\max\left(2g-1,0\right)}{d-2}}{(1-\gamma_{p,d})(e-b)-2g-\frac{2\gamma_{p,d}\max\left(2g-1,0\right)}{d-2}},
\]
as desired.
\end{proof}
\begin{cor}
\label{cor:Willstable}Let $C$ be a stable projective curve of genus
$g$ over $\F_{q}$ whose normalizations of components are $Z_{1},\ldots,Z_{t}$
over $\F_{q}$, $d\ge5$ and $n\ge4d-6$ integers, $L$ a line bundle
on $C$ of degree $e$ such that $\deg\left(L|_{Z_{i}}\right)\ge27(4d-7)g$,
$F=x_{0}^{d}+\cdots+x_{n}^{d}$, and $p\coloneqq\ch\left(\F_{q}\right)>36\frac{(4d-7)d(d-2)}{d-1}$.
Then, 
\[
\frac{\#\left\{ \vec{x}\in H^{0}\left(C,L\right)^{n+1}\colon F\left(\vec{x}\right)=0\right\} }{q^{e\left(n+1-d\right)+n(1-g)}}\to1
\]
 as $q\to\infty$. 
\begin{proof}
Each $Z_{i}$ is a smooth projective curve with genus $g\left(Z_{i}\right)$.
We proceed by induction on $t$. 

Let us start with the base case $t=1$. Denote by $\Gamma$ the stable
graph of $C$, which has a single vertex. When there are no edges,
we conclude by Theorem \ref{thm:Willgeneralization} with $b=0$.
For $t=1$ with (self) edges in $\Gamma$, note that the points on
$Z_{1}$ that are to be identified with each other come in distinct
pairs, say $\{p_{1},p_{1}'\},\ldots,\{p_{a},p_{a}'\}$. 

Note that a map from $C$ to $X$ is the same as a map from $Z_{1}$
to $X$ so that $p_{i}$ and $p_{i}'$ map to the same point in $X$.
In our setting, however, note that we don't exactly work with maps
but rather tuples of global sections satisfying the equation $F$.

Let $B$ be the divisor associated to $p_{1}'\cup\cdots\cup p_{a}'$
and suppose $\deg B=b$. Then, 
\begin{align*}
\left\{ \vec{x}\in H^{0}\left(C,L\right)^{n+1}\colon F\left(\vec{x}\right)=0\right\}  & =\left\{ \vec{x}\in H^{0}\left(Z_{1},L\right)^{n+1}\colon F\left(\vec{x}\right)=0,\vec{x}|_{B}\text{ fixed}\right\} 
\end{align*}
(more precisely, $p_{i}'$ is mapped to wherever $p_{i}$ is mapped
to). 

Since $\deg\left(L|_{Z_{1}}\right)\ge27(4d-7)g=27(4d-7)\left(g\left(Z_{1}\right)+1+b-1\right)\ge b+27(4d-7)g\left(Z_{1}\right)$,
we may apply Theorem \ref{thm:Willgeneralization} to conclude. 

Suppose then that the conclusion of the theorem is true for stable
curves with at most $t-1$ components. 

We can write $C$ as the union of $C'$ and $Z'_{t}$, where $C'$
is a stable curve with at most $t-1$ components and $Z_{t}'$ is
at worst nodal with $Z_{t}$ the normalization of $Z_{t}'$. More
precisely, $C$ is the pushout of $C'$ and $Z'_{t}$ along their
intersection $W=p_{1}\cup\cdots\cup p_{n_{t}}.$ Then, any morphism
$C\to X$ comprises the data of a morphism $C'\to X$ and a morphism
$Z_{t}'\to X$ that agree when restricted to $W$. Consider the restriction
maps $\ev\colon H^{0}\left(C',L\right)^{n+1}\to H^{0}\left(W,L\right)^{n+1}$
and $H^{0}\left(Z_{t}',L\right)^{n+1}\to H^{0}\left(W,L\right)^{n+1}$,
which is analogous to the evaluation maps $\ev\colon\Mor\left(C',X\right)\to X^{n_{t}}$
and $\Mor\left(Z_{t}',X\right)\to X^{n_{t}}$ sending a morphism $C'\to X$
(resp. a morphism $W\to X$) to the images in $X$ of the closed points
$p_{i}$. Then, 
\begin{align*}
 & \left\{ \vec{x}\in H^{0}\left(C,L\right)^{n+1}\colon F\left(\vec{x}\right)=0\right\} \\
 & =\left\{ \vec{x}\in H^{0}\left(C',L\right)^{n+1}\colon F\left(\vec{x}\right)=0\right\} \times_{\left\{ \vec{x}\in H^{0}\left(W,L\right)^{n+1}\colon F\left(\vec{x}\right)=0\right\} }\left\{ \vec{x}\in H^{0}\left(Z_{t}',L\right)^{n+1}\colon F\left(\vec{x}\right)=0\right\} .
\end{align*}
In $\Gamma$, the vertex $Z_{t}'$ has $n_{t}$ edges that are not
self edges. Suppose $Z_{t}'$ has $a$ self edges coming in pairs
$\{p_{1},p_{1}'\},\ldots,\{p_{a},p_{a}'\}$, and set $B$ to be the
divisor associated to $p_{1}'\cup\cdots\cup p_{a}'$. Let $\deg W=w$
and $\deg B=b$. Then, 
\[
\deg\left(L|_{Z_{t}}\right)\ge27(4d-7)g\ge27(4d-7)\left(g\left(C'\right)+g\left(Z_{t}'\right)+w-1\right)\ge w+b+27(4d-7)g\left(Z_{t}\right),
\]

so Theorem \ref{thm:Willgeneralization} tells us the fibers of $\left\{ \vec{x}\in H^{0}\left(Z_{t}',L\right)^{n+1}\colon F\left(\vec{x}\right)=0\right\} \to\left\{ \vec{x}\in H^{0}\left(W,L\right)^{n+1}\colon F\left(\vec{x}\right)=0\right\} $,
which by the same argument in the $t=0$ case is simply $\left\{ \vec{x}\in H^{0}\left(Z_{t},L\right)^{n+1}\colon F\left(\vec{x}\right)=0,\vec{x}|_{W\cup B}\text{ fixed}\right\} $,
are geometrically irreducible of the expected dimension. Then, by
miracle flatness, since the source of the evaluation map is a local
complete intersection, the target is smooth, and the fibers are equidimensional,
we have that the evaluation map $\left\{ \vec{x}\in H^{0}\left(Z_{t}',L\right)^{n+1}\colon F\left(\vec{x}\right)=0\right\} \to\left\{ \vec{x}\in H^{0}\left(W,L\right)^{n+1}\colon F\left(\vec{x}\right)=0\right\} $
is flat with geometrically irreducible fibers. We conclude that $\left\{ \vec{x}\in H^{0}\left(C,L\right)^{n+1}\colon F\left(\vec{x}\right)=0\right\} $
is geometrically irreducible. 

Let $e_{i}=\deg\left(L|_{Z_{i}}\right)$ and write $e_{C'}=e_{1}+\cdots+e_{t-1}$.
Note that $e=e_{C'}+e_{t}$. We have $g=g\left(C'\right)+g\left(Z_{t}'\right)+w-1.$
By the induction hypothesis and flatness, we also have 
\begin{align*}
 & \dim\left\{ \vec{x}\in H^{0}\left(C,L\right)^{n+1}\colon F\left(\vec{x}\right)=0\right\} \\
 & =\dim\left\{ \vec{x}\in H^{0}\left(C',L\right)^{n+1}\colon F\left(\vec{x}\right)=0\right\} +\dim\left\{ \vec{x}\in H^{0}\left(Z_{t}',L\right)^{n+1}\colon F\left(\vec{x}\right)=0\right\} \\
 & \qquad-\dim\left\{ \vec{x}\in H^{0}\left(W,L\right)^{n+1}\colon F\left(\vec{x}\right)=0\right\} \\
 & =\left(n+1\right)\left(e_{C'}-g\left(C'\right)+1\right)-\left(de_{C'}-g\left(C'\right)+1\right)+\left(n+1\right)\left(e_{t}-g\left(Z_{t}'\right)+1\right)-\left(de_{t}-g\left(Z_{t}'\right)+1\right)\\
 & \qquad-wn\\
 & =\left(n+1\right)\left(e-g\left(C'\right)-g\left(Z_{t}'\right)+2\right)-\left(de-g\left(C'\right)-g\left(Z_{t}'\right)+2\right)-wn\\
 & =\left(n+1\right)\left(e-g+1+w\right)-\left(de-g+1+w\right)-wn\\
 & =\left(n+1\right)\left(e-g+1\right)-\left(de-g+1\right).
\end{align*}
Since $\left\{ \vec{x}\in H^{0}\left(C,L\right)^{n+1}\colon F\left(\vec{x}\right)=0\right\} $
is geometrically irreducible of the expected dimension, the Lang--Weil
bounds give the desired result.
\end{proof}
\end{cor}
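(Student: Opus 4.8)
The plan is to prove that
$N(C)\coloneqq\{\vec x\in H^0(C,L)^{n+1}\colon F(\vec x)=0\}$
is geometrically irreducible of dimension $(n+1)(e-g+1)-(de-g+1)$ by induction on the number $t$ of components of $C$, and then conclude by Lang--Weil: an irreducible $\F_q$-variety of this dimension has $q^{(n+1)(e-g+1)-(de-g+1)}(1+o(1))$ rational points, and a one-line rearrangement identifies this exponent with $e(n+1-d)+n(1-g)$, which is exactly the asserted asymptotic. The structural input driving the induction is that a stable curve is the disjoint union of the smooth normalizations $Z_1,\dots,Z_t$ of its components with finitely many pairs of points identified, and a section of $L$ on $C$ is the same as a tuple of sections on the $Z_i$ that match at each identified pair; hence $N(C)$ is a fiber product of the corresponding counting sets over ``evaluation at the nodes'', and one can control irreducibility and dimension one component at a time.

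For the base case $t=1$ the curve is a single smooth $Z_1$ with $a$ pairs $\{p_i,p_i'\}$ glued, so its arithmetic genus is $g=g(Z_1)+a$. A section of $L$ on $C$ is a section on $Z_1$ taking equal values at $p_i$ and $p_i'$ for all $i$; fixing this common datum is a congruence condition along $B=p_1'+\cdots+p_a'$ of degree $b$, so summing over the finitely many admissible boundary tuples reduces $N(C)$ to the sets counted in Theorem~\ref{thm:Willgeneralization}. One then only checks the degree bound $e=\deg(L|_{Z_1})\ge b+27(4d-7)g(Z_1)$, which follows from $\deg(L|_{Z_1})\ge 27(4d-7)g=27(4d-7)(g(Z_1)+b)$ because $27(4d-7)\ge 1$; Theorem~\ref{thm:Willgeneralization} then gives $N(C)$ geometrically irreducible of the expected dimension.

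For the inductive step I would peel off one component, writing $C$ as the pushout of a stable curve $C'$ with $t-1$ components and a last component $Z_t'$ (itself possibly carrying self-nodes) glued along a reduced divisor $W$ of nodes, so that $N(C)=N(C')\times_{N(W)}N(Z_t')$, where $N(W)=\{\vec x\in H^0(W,L)^{n+1}\colon F(\vec x)=0\}$ is essentially an affine Fermat cone over the finite set $W$. By induction $N(C')$ is geometrically irreducible of the expected dimension, and $N(Z_t')$ is too by the one-component analysis. By that same analysis the fibers of the evaluation map $N(Z_t')\to N(W)$ are of the form $\{\vec x\in H^0(Z_t,L)^{n+1}\colon F(\vec x)=0,\ \vec x|_{W\cup B}\text{ fixed}\}$, and Theorem~\ref{thm:Willgeneralization} makes them geometrically irreducible of the expected dimension once one checks $\deg(L|_{Z_t})\ge \deg W+\deg B+27(4d-7)g(Z_t)$, which propagates from $\deg(L|_{Z_t})\ge 27(4d-7)g$ using the genus identity $g=g(C')+g(Z_t')+\deg W-1$ and $27(4d-7)\ge 1$. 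Since $N(Z_t')$ is a complete intersection of the expected codimension in an affine space (hence Cohen--Macaulay), $N(W)$ is smooth, and the fibers are equidimensional of the expected codimension, miracle flatness shows $N(Z_t')\to N(W)$ is flat with geometrically irreducible fibers; base-changing this flat morphism along the geometrically irreducible $N(C')\to N(W)$ yields $N(C)$ geometrically irreducible, and flatness gives $\dim N(C)=\dim N(C')+\dim N(Z_t')-\dim N(W)$. Substituting the three expected dimensions and simplifying with $e=e_{C'}+e_t$ and the genus identity collapses the $\deg W$ terms and produces $(n+1)(e-g+1)-(de-g+1)$, closing the induction, after which Lang--Weil finishes.

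I expect the main obstacle to be the bookkeeping in the inductive step rather than any single hard idea: one must verify that the miracle-flatness hypotheses genuinely hold (in particular that $N(W)$ is smooth on the relevant locus and that the restriction $H^0(Z_t',L)\to H^0(W\cup B,L)$, hence the evaluation map $N(Z_t')\to N(W)$, is surjective --- this is precisely what the large coefficient $27(4d-7)$ is engineered to guarantee by forcing $L$ to be non-special on each component after removing the node divisors), that geometric irreducibility is preserved under the fiber product, and that the genus and degree identities are tracked carefully enough that the expected-dimension count telescopes and the degree hypotheses of Theorem~\ref{thm:Willgeneralization} survive at every stage of the induction.
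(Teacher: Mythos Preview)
Your proposal is correct and follows essentially the same approach as the paper: induction on the number of components, the fiber-product decomposition $N(C)=N(C')\times_{N(W)}N(Z_t')$, Theorem~\ref{thm:Willgeneralization} to control the fibers of $N(Z_t')\to N(W)$, miracle flatness to propagate irreducibility and dimension, and Lang--Weil to finish. The paper's argument is nearly identical in structure and in the bookkeeping, including the same degree checks and genus identities you outline; your caveat about verifying smoothness of $N(W)$ on the relevant locus is well-placed, as the paper asserts this without further comment.
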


\begin{proof}
[Proof of Proposition \ref{cor:generalexpdim}] By general lower bounds
on dimension from deformation theory (as explained for example in
Theorem 1.2 of \cite{Kollar_Rational_Curves}), it suffices to prove
an upper bound on dimension that matches the lower bound. 

Note that $V_{d}\to U_{d}$ is proper and one of its fibers (above
the Fermat hypersurface) has dimension exactly $\mu\coloneqq\left(n+1\right)\left(e-g+1\right)-\left(de-g+1\right)-1+g$
by Corollary \ref{cor:Willstable} after possibly base-extending and
applying the Lang--Weil bounds, so upper semicontinuity of fiber
dimension implies the generic fiber also has dimension $\mu$. 
\end{proof}

\section{From geometric Manin's conjecture in positive characteristic to Fujita
invariants\label{sec:From-geometric-Manin's-1}}

In this section, we prove Proposition \ref{prop:highergenusstrat}
\begin{proof}
[Proof of Proposition \ref{prop:highergenusstrat}]

For the sake of contradiction, let $V$ be a proper subvariety of
$X$ such that $a\left(V,-K_{X}|_{V}\right)\ge1$. 

Now, suppose $Y\to V$ is a resolution of singularities. Then, $a\left(Y,-K_{X}|_{Y}\right)=a\left(V,-K_{X}|_{V}\right)\ge1$.
This implies that $K_{Y}-K_{X}|_{Y}+\frac{1}{m}K_{X}|_{Y}$ is not
pseudo-effective for any integer $m>0$. By the duality of the cone
of pseudo-effective divisors and the closure of the cone of movable
curves as established by \cite{BDPP}, it follows that for a general
point $y\in Y$, we have a smooth projective curve $C_{m}$ passing
through $y$ such that 
\[
\left(-K_{Y}+K_{X}|_{Y}-\frac{1}{m}K_{X}|_{Y}\right)\cdot C_{m}>0,
\]
i.e. 
\begin{align*}
\left(-K_{Y}+K_{X}|_{Y}\right)\cdot C_{m} & >\frac{1}{m}K_{X}|_{Y}\cdot C_{m}.
\end{align*}

Choose an integer $m\ge1$ such that 

\begin{equation}
m>A\coloneqq27(4d-7)\label{eq:mchoice}
\end{equation}

Let $C=C_{m}$ and write 
\[
e=K_{X}|_{Y}\cdot C.
\]

We now pass to positive characteristic by spreading out. Let us describe
this procedure in detail: 

Let $\Spec K\left(U_{d}\right)$ be the generic point of $U_{d}$.
We have a diagram \[\begin{tikzcd} 	X & {\P^n_{K(U_d)}} \\ 	{\Spec K(U_d)} 	\arrow[hook, from=1-1, to=1-2] 	\arrow[from=1-1, to=2-1] 	\arrow[from=1-2, to=2-1] \end{tikzcd}\]that
can be spread out to \[\begin{tikzcd} 	{\mathcal{X}_{U_d}} & {\P^n_{R}} \\ 	{\Spec R} 	\arrow[hook, from=1-1, to=1-2] 	\arrow[from=1-1, to=2-1] 	\arrow[from=1-2, to=2-1] \end{tikzcd}\]

for a sufficiently small affine open $\Spec R\hookrightarrow U_{d}$,
i.e. to a family of smooth hypersurfaces $\mathcal{X}_{U_{d}}\to\Spec R$
so that the generic fiber of this family is $X$ and $\mathcal{X}_{U_{d}}=\Spec R\times_{U_{d}}\mathcal{H}_{d}$
(by the universal property of the moduli space of degree $d$ hypersurfaces
in $\P^{n}$). Since $Y\to V\hookrightarrow X$ and $C$ are all of
finite type over the generic point of $U_{d}$, by possibly shrinking
$R$, we may moreover assume that $C\hookrightarrow Y\to V\hookrightarrow X$
spreads out to \[\begin{tikzcd} 	{\mathcal{Y}_{U_d}} & {\mathcal{V}_{U_d}} & {\mathcal{X}_{U_d}} & {\P^n_{R}} \\ 	& {\mathcal{C}_{U_d}} & {\Spec R} 	\arrow[from=1-1, to=1-2] 	\arrow[from=1-1, to=2-3] 	\arrow[hook, from=1-2, to=1-3] 	\arrow[from=1-2, to=2-3] 	\arrow[hook, from=1-3, to=1-4] 	\arrow[from=1-3, to=2-3] 	\arrow[from=1-4, to=2-3] 	\arrow[hook, from=2-2, to=1-1] 	\arrow[from=2-2, to=2-3] \end{tikzcd}\]
i.e., where $Y$ is the generic fiber of a smooth and projective family
$\mathcal{Y}_{U_{d}}\to\Spec R,$ $C$ is the generic fiber of a smooth
and projective family $\mathcal{C}_{U_{d}}\to\Spec R$, and $V$ is
the generic fiber of a flat and projective family $\mathcal{V}_{U_{d}}\to\Spec R$. 

Since $\Spec R$ and $U_{d}$ are both of finite type over $\Spec\C$,
we may spread out both to an affine open inclusion $\Spec\mathcal{R}\hookrightarrow\mathcal{U}_{d},$
living over $\Spec\Lambda$, where $\Lambda$ is a finitely-generated
$\Z$-algebra that is an integral domain (so that $\Spec\Lambda$
is irreducible) obtained by adjoining coefficients defining $\Spec R$
and $U_{d}$. More precisely, the induced map from the generic fiber
of $\Spec\mathcal{R}\to\Spec\Lambda$ to the generic fiber of $\mathcal{U}_{d}\to\Spec\Lambda$,
base changed to $\C$, is the affine open immersion $\Spec R\hookrightarrow U_{d}$. 

We have the following commutative diagram, where the rightmost square
is Cartesian.\[\begin{tikzcd} 	{\mathcal{Y}_{U_{d}}} & {\mathcal{V}_{U_{d}}} & {\mathcal{X}_{U_{d}}} & \mathcal{H}_{d} \\ 	& {\mathcal{C}_{U_{d}}} & {\Spec{R}} & {U_{d}} 	\arrow[from=1-1, to=1-2] 	\arrow[from=1-1, to=2-3] 	\arrow[hook, from=1-2, to=1-3] 	\arrow[from=1-2, to=2-3] 	\arrow[from=1-3, to=1-4] 	\arrow[from=1-3, to=2-3] 	\arrow[from=1-4, to=2-4] 	\arrow[hook, from=2-2, to=1-1] 	\arrow[from=2-2, to=2-3] 	\arrow[hook, from=2-3, to=2-4] \end{tikzcd}\]

By possibly shrinking $\Lambda$, we may assume that this diagram
(which lives over $\C$) spreads out to the diagram \[\begin{tikzcd} 	{\mathcal{Y}_\mathcal{W}} & {\mathcal{V}_\mathcal{W}} & {\mathcal{X}_\mathcal{W}} & {\mathcal{F}} \\ 	& {\mathcal{C}_\mathcal{W}} & {\Spec{\mathcal{R}}} & {\mathcal{W}} 	\arrow[from=1-1, to=1-2] 	\arrow[from=1-1, to=2-3] 	\arrow[hook, from=1-2, to=1-3] 	\arrow[from=1-2, to=2-3] 	\arrow[from=1-3, to=1-4] 	\arrow[from=1-3, to=2-3] 	\arrow[from=1-4, to=2-4] 	\arrow[hook, from=2-2, to=1-1] 	\arrow[from=2-2, to=2-3] 	\arrow[hook, from=2-3, to=2-4] \end{tikzcd}\]which
again means that after taking the same diagram after taking the generic
fibers (of each object mapping to $\Spec\Lambda$), and then base
changing to $\C$, we obtain the previous diagram.

For $t\in\Spec\mathcal{R}$, the zero-cycles $\left(-K_{\left(\mathcal{Y}_{\mathcal{W}}\right)_{t}}+K_{\left(\mathcal{X}_{\mathcal{W}}\right)_{t}}|_{\left(\mathcal{Y}_{\mathcal{W}}\right)_{t}}\right)\cdot\left(\mathcal{C}_{\mathcal{W}}\right)_{t}$
have constant degree, and hence all have degree $>\frac{1}{m}e,$
since over the generic point of the generic fiber of $\Spec\mathcal{R}\to\Spec\Lambda$,
this is true.

By Zariski's lemma, every closed point of $\Spec\Lambda$ has finite
residue field. Also, since the generic fiber of the map $\mathcal{W}\to\Spec\Lambda$
is irreducible, there is an open neighborhood of $\Spec\Lambda$ such
that the fiber above any point is irreducible. Take such a point $P$
and suppose it has residue field $\F_{q}$. We may moreover assume
the characteristic $p$ of $\F_{q}$ is large enough so that
\begin{equation}
\left(\frac{1}{A}-\frac{1}{m}\right)p^{2}>p+\frac{p-1}{2}\text{ and }p>36\frac{(4d-7)d(d-2)}{d-1}.\label{eq:pchoice}
\end{equation}

Let $\mathcal{W}_{\F_{q}}$ be $\mathcal{W}\otimes_{\Lambda}\kappa(P)$,
which is isomorphic to the moduli space of smooth hypersurfaces of
degree $d$ in $\P_{\F_{q}}^{n}.$ Let $\mathcal{F}_{\F_{q}}$ be
$\mathcal{F}\times_{\mathcal{W}}\mathcal{W}_{\F_{q}}$, $\mathcal{X}_{\mathcal{W}_{\F_{q}}}$
be $\mathcal{\mathcal{X}}_{\mathcal{W}}\times_{\mathcal{W}}\mathcal{W}_{\F_{q}}$,
and similarly for $\mathcal{R}_{\F_{q}},\mathcal{Y}_{\mathcal{W}_{\F_{q}}},\mathcal{V}_{\mathcal{W}_{\F_{q}}},$
and $\mathcal{C}_{\mathcal{W}_{\F_{q}}}$. Taking the open neighborhood
in the previous paragraph to be small enough, we may moreover assume
$\mathcal{Y}_{\mathcal{W}_{\F_{q}}}$ and $\mathcal{C}_{\mathcal{W}_{\F_{q}}}$
are smooth over $\Spec\mathcal{R}_{\F_{q}}$ and that $\mathcal{V}_{\mathcal{W}_{\F_{q}}}$
is flat over $\Spec\mathcal{R}_{\F_{q}}$.

We have the diagram\[\begin{tikzcd} 	{\mathcal{Y}_{\mathcal{W}_{\F_q}}} & {\mathcal{V}_{\mathcal{W}_{\F_q}}} & {\mathcal{X}_{\mathcal{W}_{\F_q}}} & {\mathcal{F}_{\F_q}} \\ 	& {\mathcal{C}_{\mathcal{W}_{\F_q}}} & {\Spec{\mathcal{R}_{\F_q}}} & {\mathcal{W}_{\F_q}} 	\arrow[from=1-1, to=1-2] 	\arrow[from=1-1, to=2-3] 	\arrow[hook, from=1-2, to=1-3] 	\arrow[from=1-2, to=2-3] 	\arrow[from=1-3, to=1-4] 	\arrow[from=1-3, to=2-3] 	\arrow[from=1-4, to=2-4] 	\arrow[hook, from=2-2, to=1-1] 	\arrow[from=2-2, to=2-3] 	\arrow[hook, from=2-3, to=2-4] \end{tikzcd}\]

By the Krull--Akizuki theorem, there exists a DVR $\Delta$ with
a map $\Spec\Delta\to\mathcal{W}_{\F_{q}}$ such that the generic
point of $\Delta$ maps to the generic point of $\mathcal{W}_{\F_{q}}$
isomorphically and the special point of $\Delta$ maps to the closed
point in $\mathcal{W}_{\F_{q}}$ corresponding to the Fermat hypersurface
of degree $d$. 

Let $\eta$ be the generic point of $\Delta$ (which is also the generic
point of $\mathcal{W}_{\F_{q}}$ by the above) and $s$ be the special
point of $\Delta$. Let $C_{\eta}$ be the generic fiber of $\mathcal{C}_{\mathcal{W}_{\F_{q}}}\to\Spec\mathcal{R}_{\F_{q}}$,
hence a smooth projective curve over $\eta$. Similarly, let $Y_{\eta},V_{\eta},$
and $X_{\eta}$ be the generic fibers of $\mathcal{Y}_{\mathcal{W}_{\F_{q}}}\to\Spec\mathcal{R}_{\F_{q}},\mathcal{V}_{\mathcal{W}_{\F_{q}}}\to\Spec\mathcal{R}_{\F_{q}},$
and $\mathcal{X}_{\mathcal{W}_{\F_{q}}}\to\Spec\mathcal{R}_{\F_{q}}$. 

We will take a suitable cover $C_{\eta}'\to C_{\eta}$ with $C_{\eta}'$
also a smooth projective curve that will be described in detail later. 

By the stable reduction theorem, after possibly replacing $\Delta$
with a dominating DVR, there exists a stable family $\mathcal{Z}\to\Spec\Delta$
so that the generic fiber $\mathcal{Z}_{\eta}$ is $C_{\eta}'$ and
the special fiber $\mathcal{Z}_{s}$ is a stable curve, both of the
same genus. 

We will then show that the composition $h'\colon\mathcal{Z}_{\eta}\to C_{\eta}\hookrightarrow Y_{\eta}\to V_{\eta}$,
which we may also view as a map $h'\colon\mathcal{Z}_{\eta}\to X_{\eta}$
by precomposition with the inclusion $V_{\eta}\hookrightarrow X_{\eta}$,
satisfies 
\[
\dim_{[h']}\Mor\left(\mathcal{Z}_{\eta},V_{\eta}\right)>\dim_{[h']}\Mor\left(\mathcal{Z}_{\eta},X_{\eta}\right),
\]
which is evidently a contradiction, since any map $\mathcal{Z}_{\eta}\to V_{\eta}$
can be extended to a map $\mathcal{Z}_{\eta}\to X_{\eta}$ via the
precomposition $V_{\eta}\hookrightarrow X_{\eta}$ uniquely. Here,
$\Mor\left(\mathcal{Z}_{\eta},V_{\eta}\right)$ and $\Mor\left(\mathcal{Z}_{\eta},X_{\eta}\right)$
are the schemes representing the spaces of morphisms $\mathcal{Z}_{\eta}\to V_{\eta}$
and $\mathcal{Z}_{\eta}\to X_{\eta}$, respectively.

Let us now explain how to construct the cover $C_{\eta}'\to C_{\eta}$.

Choose an integer $b\ge1$ such that 
\begin{equation}
ep^{b+1}\ge A\left(pg\left(C_{\eta}\right)+\frac{p-1}{2}\left(2g\left(C_{\eta}\right)-1\right)\right)+A\left(\frac{p-1}{2}\right).\label{eq:bchoice}
\end{equation}

Finally, we claim we can choose $m_{x}$ such that 

\begin{equation}
A\left(pg\left(C_{\eta}\right)+\frac{p-1}{2}\left(m_{x}-1\right)\right)\le ep^{b+1}<m\left(pg\left(C_{\eta}\right)+\frac{p-1}{2}\left(m_{x}-1\right)-1\right),\label{eq:mxchoice}
\end{equation}

\begin{equation}
m_{x}\ge2g\left(C_{\eta}\right),\label{eq:mxgenuscheck}
\end{equation}
and 

\begin{equation}
p\nmid m_{x}.\label{eq:mxdivisiblechoice}
\end{equation}

Note that increasing $m_{x}$ by 1 increases the quantity $A\left(pg\left(C_{\eta}\right)+\frac{p-1}{2}\left(m_{x}-1\right)\right)$
by $A\left(\frac{p-1}{2}\right)$, so let us choose $c$ so that $ep^{b+1}=A\left(pg\left(C_{\eta}\right)+\frac{p-1}{2}\left(c-1\right)\right)+E$
satisfies $A\left(\frac{p-1}{2}\right)\le E\le A\left(p-1\right)-1$.
In particular, note that $ep^{b+1}\ge A\left(pg\left(C_{\eta}\right)+\frac{p-1}{2}\left((c+1)-1\right)\right)$
as well. Moreover, by (\ref{eq:bchoice}), we have $c\ge2g\left(C_{\eta}\right)$.

Then, by (\ref{eq:pchoice}), we have 
\[
\left(\frac{1}{A}-\frac{1}{m}\right)ep^{b+1}\ge\left(\frac{1}{A}-\frac{1}{m}\right)p^{2}>p+\frac{p-1}{2}>\frac{E}{A}+1+\frac{p-1}{2},
\]

which can be rearranged to imply 
\[
ep^{b+1}<m\left(\frac{ep^{b+1}-E}{A}-1-\frac{p-1}{2}\right)=m\left(pg(C)+\frac{p-1}{2}\left(c-1\right)-1-\frac{p-1}{2}\right).
\]
If $p\nmid c$, let $m_{x}=c$; else, let $m_{x}=c+1$, which shows
that (\ref{eq:mxchoice}), (\ref{eq:mxgenuscheck}), and (\ref{eq:mxdivisiblechoice})
can be satisfied.

Consider $C_{\eta}'\to C_{\eta}$ a degree $p$ Artin--Schreier cover
given by $y^{p}-y=u$ with $u\in K\left(C_{\eta}\right)$ chosen so
that there is exactly one degree one point $x\in C_{\eta}$ for which
$v_{x}\left(u\right)=-m_{x}$ with $p\nmid m_{x}$ and $m_{x}$ a
positive integer, and $v_{x'}(u)\ge0$ for all other closed points
$x'$ by Riemann--Roch. More precisely, this follows from the fact
that $H^{0}\left(C_{\eta},\mathcal{O}\left(\left(m_{x}-1\right)x\right)\right)\to H^{0}\left(C_{\eta},\mathcal{O}\left(m_{x}x\right)\right)$
is not surjective, which is true because the former has dimension
$m_{x}-1-g+1$ and the latter has dimension $m_{x}-g+1$ by assumption
that $m_{x}\ge2g$.

Then, one can verify using Riemann--Hurwitz (see Proposition 3.7.8
of \cite{stichtenoth2009algebraic}) that 
\[
g\left(C_{\eta}'\right)=pg\left(C_{\eta}\right)+\frac{p-1}{2}\left(m_{x}-1\right).
\]

Moreover, consider the degree $p$ arithmetic Frobenius $C_{\eta}'\to C_{\eta}'$,
and iterate this $b$ times, so we obtain a degree $p^{b+1}$ map
$C_{\eta}'\to C_{\eta}$ given by first applying the Artin--Schreier
cover and then applying the Frobenius\textbf{ }$b$ times. Denote
by $h'$ the composition $C_{\eta}'\to C_{\eta}\to Y_{\eta}$. 

By the general lower bounds (as explained for example in Theorem 1.2
of \cite{Kollar_Rational_Curves}) on the dimension of moduli spaces
of maps, the fact that $Y_{\eta}\to V_{\eta}$ is birational, and
Lemma \ref{lem:lowerboundim-1} below, we have 
\[
\dim\Mor_{[h']}\left(\mathcal{Z}_{\eta},V_{\eta}\right)\ge\dim_{[h']}\Mor_{U}\left(\mathcal{Z}_{\eta},Y_{\eta}\right)\ge-h'^{*}K_{Y_{\eta}}\cdot\mathcal{Z}_{\eta}+\left(1-g\left(\mathcal{Z}_{\eta}\right)\right)\dim Y_{\eta},
\]
for some non-empty open subset $U$ of $Y_{\eta}$. 

Now, recalling the definition of the proper family $V_{d}\to U_{d}$
in Proposition \ref{cor:generalexpdim}, denote by $M\left(\mathcal{Z}_{\eta},X_{\eta}\right)$
the fiber above the hypersurface $X_{\eta}$ and similarly for $M\left(\mathcal{Z}_{s},X_{s}\right)$.

By inclusion for the first inequality, upper-semicontinuity of fiber
dimension for a proper family for the second, and Proposition \ref{cor:generalexpdim}
for the third, we have 
\begin{align*}
\dim\Mor\left(\mathcal{Z}_{\eta},X_{\eta}\right) & \le\dim M\left(\mathcal{Z}_{\eta},X_{\eta}\right)\le\dim M\left(\mathcal{Z}_{s},X_{s}\right)=-h'^{*}K_{X_{s}}|_{Y_{s}}\cdot\mathcal{Z}_{s}+\left(1-g\left(\mathcal{Z}_{s}\right)\right)(n-1).
\end{align*}
Combining these, we obtain
\begin{align*}
 & \dim\Mor_{[h']}\left(\mathcal{Z}_{\eta},V_{\eta}\right)-\dim\Mor_{[h']}\left(\mathcal{Z}_{\eta},X_{\eta}\right)\\
 & \ge-h'^{*}K_{Y_{\eta}}\cdot\mathcal{Z}_{\eta}+h'^{*}K_{X_{s}}|_{Y_{s}}\cdot\mathcal{Z}_{s}+\left(1-g\left(\mathcal{Z}_{\eta}\right)\right)\dim Y_{\eta}+\left(1-g\left(\mathcal{Z}_{s}\right)\right)\left(-n+1\right)\\
 & =-h'^{*}K_{Y_{\eta}}\cdot\mathcal{Z}_{\eta}+h'^{*}K_{X_{\eta}}|_{Y_{\eta}}\cdot\mathcal{Z}_{\eta}+\left(1-g\left(\mathcal{Z}_{\eta}\right)\right)\dim Y_{\eta}+\left(1-g\left(\mathcal{Z}_{\eta}\right)\right)\left(-n+1\right)\\
 & \ge\left(-h'^{*}K_{Y_{\eta}}+h'^{*}K_{X_{\eta}}|_{Y_{\eta}}\right)\cdot\mathcal{Z}_{\eta}+\left(g\left(\mathcal{Z}_{\eta}\right)-1\right)\\
 & >-\frac{ep^{b+1}}{m}+\left(g\left(C'\right)-1\right)\\
 & >0
\end{align*}
by the second inequality of (\ref{eq:mxchoice}), which yields the
necessary contradiction.
\end{proof}
\begin{lem}
\label{lem:lowerboundim-1} Let $C$ be a smooth projective curve
and $Y$ be a smooth variety. Suppose for every point $y$ of $Y$,
there exists a map $g\colon C\to Y$ with fixed curve class $g_{*}C=\alpha$
and whose image contains $y$. Let $\Mor\left(C,Y;\alpha\right)$
be the space of maps $C\to Y$ such that $f_{*}C=\alpha$ and $\Mor_{U}\left(C,Y;\alpha\right)$
be the subspace of such maps whose image intersects $U\subset Y$.
There is a non-empty open subset $U\subset Y$ and some map $f\colon C\to Y$
with curve class $f_{*}C=\alpha$ with the image of $C$ intersecting
$U$, such that $\dim_{[f]}\Mor_{U}\left(C,Y;\alpha\right)\ge-K_{Y}\cdot\alpha+\left(1-g(C)\right)n'$,
where $n'$ is the dimension of $Y$. 
\begin{proof}
For a fixed closed point $p\in C$, consider the morphism $\Mor\left(C,Y;\alpha\right)\to Y$
that sends a map to the image of $p$ in $Y$. We can similarly define
$\Mor\left(C,Y;\alpha,p\mapsto u\right)$ and $\Mor\left(C,Y;\alpha,p\mapsto U\right)$
to denote maps that send $p$ to a fixed point $u\in U$ and those
that send $p$ to some point in $U$, respectively. We then have the
following two Cartesian diagrams:

\[\begin{tikzcd}          {\Mor(C,Y;\alpha,p\mapsto u)} \ar[r] \ar[d] \ar[dr, phantom, "\square"] &  {\Mor(C,Y;\alpha,p\mapsto U)} \ar[d] \ar[dr, phantom, "\square"] \ar[r] & {\Mor(C,Y;\alpha)}\ar[d]  \\         u \ar[r]& U \ar[r] & Y     \end{tikzcd}\]Then,
for some non-empty open subset $U$, we have 
\begin{align*}
\dim_{[f]}\Mor\left(C,Y;\alpha,p\mapsto U\right) & =\dim U+\dim_{[f]}\Mor\left(C,Y;\alpha,p\mapsto u\right)\\
 & =n'+\dim_{[f]}\Mor\left(C,Y;\alpha,p\mapsto u\right).
\end{align*}
Using the lower bounds on $\dim_{[f]}\Mor\left(C,Y;\alpha,p\mapsto u\right)$
from Theorem 1.2 of \cite{Kollar_Rational_Curves}, we have 
\begin{align*}
\dim_{[f]}\Mor_{U}\left(C,Y;\alpha\right) & \ge\dim_{[f]}\Mor\left(C,Y;\alpha,p\mapsto U\right)\\
 & \ge n'-f^{*}K_{Y}\cdot C+\left(1-g\left(C\right)\right)n'-n'\\
 & =-K_{Y}\cdot\alpha+\left(1-g\left(C\right)\right)n'.
\end{align*}
\end{proof}
\end{lem}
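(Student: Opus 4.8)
The plan is to deduce the bound from the classical deformation-theoretic estimate for spaces of maps, using the open set $U$ and an evaluation morphism purely as bookkeeping to reinstate the dimension lost to a point condition. The engine is Theorem 1.2 of \cite{Kollar_Rational_Curves}: since $Y$ is smooth, for any $h\colon C\to Y$ with $h_{*}C=\alpha$ one has $\dim_{[h]}\Mor(C,Y;\alpha)\ge\chi\left(C,h^{*}T_{Y}\right)=\deg h^{*}T_{Y}+\left(1-g(C)\right)n'=-K_{Y}\cdot\alpha+\left(1-g(C)\right)n'$, because $\deg h^{*}T_{Y}=c_{1}(T_{Y})\cdot h_{*}C=-K_{Y}\cdot\alpha$; imposing in addition that a fixed point $p\in C$ map to a fixed point $u\in Y$ twists $h^{*}T_{Y}$ by $\O_{C}(-p)$ and hence costs at most $n'$, so $\dim_{[h]}\Mor(C,Y;\alpha,p\mapsto u)\ge\chi\left(C,h^{*}T_{Y}(-p)\right)=-K_{Y}\cdot\alpha+\left(1-g(C)\right)n'-n'$.

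Concretely, I would fix a closed point $p\in C$ and the evaluation morphism $\ev_{p}\colon\Mor(C,Y;\alpha)\to Y$, $h\mapsto h(p)$. The hypothesis that curves of class $\alpha$ pass through every point of $Y$ says exactly that the universal evaluation $C\times\Mor(C,Y;\alpha)\to Y$ is surjective; restricting to an irreducible component on which it is dominant and using that every such $h$ is non-constant (as $\alpha\neq0$), one gets that $\ev_{p}$ itself is dominant onto a dense open of $Y$. Now apply generic flatness to $\ev_{p}$ to obtain a non-empty open $U\subseteq Y$ over which $\Mor(C,Y;\alpha,p\mapsto U)=\ev_{p}^{-1}(U)\to U$ is flat, hence equidimensional; choosing $u\in U$ in the image and $h$ with $h(p)=u$, the two Cartesian squares $\Mor(C,Y;\alpha,p\mapsto u)\to\Mor(C,Y;\alpha,p\mapsto U)\to\Mor(C,Y;\alpha)$ over $\{u\}\to U\to Y$ give $\dim_{[h]}\Mor(C,Y;\alpha,p\mapsto U)=\dim U+\dim_{[h]}\Mor(C,Y;\alpha,p\mapsto u)=n'+\dim_{[h]}\Mor(C,Y;\alpha,p\mapsto u)$. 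Since a map sending $p$ into $U$ has image meeting $U$, we have $\Mor(C,Y;\alpha,p\mapsto U)\subseteq\Mor_{U}(C,Y;\alpha)$, and combining the last identity with the pointed Kollár bound yields $\dim_{[h]}\Mor_{U}(C,Y;\alpha)\ge n'+\left(-K_{Y}\cdot\alpha+\left(1-g(C)\right)n'-n'\right)=-K_{Y}\cdot\alpha+\left(1-g(C)\right)n'$, with this $U$ and $f=h$, whose image meets $U$ since $h(p)\in U$.

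The only step that needs care is the dimension identity $\dim\Mor(C,Y;\alpha,p\mapsto U)=\dim U+\dim\Mor(C,Y;\alpha,p\mapsto u)$: it requires $\ev_{p}$ to behave like a flat morphism near the chosen $h$, and this is precisely where the hypothesis that class-$\alpha$ curves sweep out $Y$ is used — to force $\ev_{p}$ to dominate $Y$, after which generic flatness supplies a good $U$; everything else is Kollár's estimate applied verbatim. I should note that one can also argue more directly: $\Mor_{U}(C,Y;\alpha)$ is open in $\Mor(C,Y;\alpha)$ (its complement is $\{h:h(C)\subseteq Y\setminus U\}$, which is closed since $C$ is proper), the hypothesis supplies, for any non-empty open $U$ and any $y\in U$, a map $h$ with $h_{*}C=\alpha$ and $y\in h(C)$, and then $\dim_{[h]}\Mor_{U}(C,Y;\alpha)=\dim_{[h]}\Mor(C,Y;\alpha)\ge-K_{Y}\cdot\alpha+\left(1-g(C)\right)n'$ already by Kollár; the evaluation-map formulation above is mostly a convenient repackaging for the way the lemma is invoked in Proposition \ref{prop:highergenusstrat}.
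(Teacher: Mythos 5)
Your main argument is exactly the paper's proof: the same evaluation map $\ev_{p}$, the same two Cartesian squares, the same dimension identity $\dim\Mor\left(C,Y;\alpha,p\mapsto U\right)=n'+\dim\Mor\left(C,Y;\alpha,p\mapsto u\right)$, and the same pointed Koll\'ar estimate; you merely make explicit the generic-flatness/dominance justification for that identity, which the paper leaves implicit. Your closing remark---that openness of $\Mor_{U}\left(C,Y;\alpha\right)$ in $\Mor\left(C,Y;\alpha\right)$ together with the unpointed Koll\'ar bound already gives the claim for any non-empty $U$---is also correct and is arguably the shorter route, but it is an aside rather than a different proof.
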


\bibliographystyle{plainurl}
\bibliography{General_Fujita_Invariant_Actual}

\begin{thebibliography}{10}

\bibitem{bilu2023motivic}
Margaret Bilu and Tim Browning.
\newblock A motivic circle method, 2023.
\newblock URL: \url{https://arxiv.org/abs/2304.09645}, \href
  {http://arxiv.org/abs/2304.09645} {\path{arXiv:2304.09645}}.

\bibitem{Bir1}
Caucher Birkar.
\newblock Anti-pluricanonical systems on {F}ano varieties.
\newblock {\em Ann. of Math. (2)}, 190(2):345--463, 2019.
\newblock \href {https://doi.org/10.4007/annals.2019.190.2.1}
  {\path{doi:10.4007/annals.2019.190.2.1}}.

\bibitem{Bir2}
Caucher Birkar.
\newblock Singularities of linear systems and boundedness of {F}ano varieties.
\newblock {\em Ann. of Math. (2)}, 193(2):347--405, 2021.
\newblock \href {https://doi.org/10.4007/annals.2021.193.2.1}
  {\path{doi:10.4007/annals.2021.193.2.1}}.

\bibitem{BDPP}
S{\'e}bastien Boucksom, Jean-Pierre Demailly, Mihai Paun, and Thomas Peternell.
\newblock The pseudo-effective cone of a compact {K}{\"a}hler manifold and
  varieties of negative {K}odaira dimension.
\newblock {\em Journal of Algebraic Geometry}, 22(2):201--248, 2013.

\bibitem{cohomology_circle}
Tim Browning and Will Sawin.
\newblock A geometric version of the circle method.
\newblock {\em Ann. of Math. (2)}, 191(3):893--948, 2020.
\newblock \href {https://doi.org/10.4007/annals.2020.191.3.4}
  {\path{doi:10.4007/annals.2020.191.3.4}}.

\bibitem{BrowningSawinFree}
Tim Browning and Will Sawin.
\newblock Free rational curves on low degree hypersurfaces and the circle
  method.
\newblock {\em Algebra Number Theory}, 17(3):719--748, 2023.
\newblock \href {https://doi.org/10.2140/ant.2023.17.719}
  {\path{doi:10.2140/ant.2023.17.719}}.

\bibitem{BrowningVisheRatCurves}
Tim Browning and Pankaj Vishe.
\newblock Rational curves on smooth hypersurfaces of low degree.
\newblock {\em Algebra Number Theory}, 11(7):1657--1675, 2017.
\newblock \href {https://doi.org/10.2140/ant.2017.11.1657}
  {\path{doi:10.2140/ant.2017.11.1657}}.

\bibitem{browning2024rationalcurvescompleteintersections}
Tim Browning, Pankaj Vishe, and Shuntaro Yamagishi.
\newblock Rational curves on complete intersections and the circle method,
  2024.
\newblock URL: \url{https://arxiv.org/abs/2404.11123}, \href
  {http://arxiv.org/abs/2404.11123} {\path{arXiv:2404.11123}}.

\bibitem{terminal}
Jakob Glas and Matthew Hase-Liu.
\newblock Terminal singularities of the moduli space of curves on low degree
  hypersurfaces and the circle method, 2024.
\newblock \href {http://arxiv.org/abs/2412.14923} {\path{arXiv:2412.14923}}.

\bibitem{HarrisRothStarrRatCurvesI}
Joe Harris, Mike Roth, and Jason Starr.
\newblock Rational curves on hypersurfaces of low degree.
\newblock {\em J. Reine Angew. Math.}, 571:73--106, 2004.
\newblock \href {https://doi.org/10.1515/crll.2004.045}
  {\path{doi:10.1515/crll.2004.045}}.

\bibitem{haseliu2024higher}
Matthew Hase-Liu.
\newblock A higher genus circle method and an application to geometric
  {M}anin's conjecture, 2024.
\newblock \href {http://arxiv.org/abs/2402.10498} {\path{arXiv:2402.10498}}.

\bibitem{non-smooth}
Matthew Hase-Liu and Amal Mattoo.
\newblock Non-smoothness of {M}oduli spaces of {H}igher {G}enus {C}urves on
  {L}ow {D}egree {H}ypersurfaces, 2024.
\newblock \href {http://arxiv.org/abs/2412.04618} {\path{arXiv:2412.04618}}.

\bibitem{KW}
Reinhardt Kiehl and Rainer Weissauer.
\newblock {\em Weil conjectures, perverse sheaves and {$l$}'adic {F}ourier
  transform}, volume~42 of {\em Ergebnisse der Mathematik und ihrer
  Grenzgebiete. 3. Folge. A Series of Modern Surveys in Mathematics [Results in
  Mathematics and Related Areas. 3rd Series. A Series of Modern Surveys in
  Mathematics]}.
\newblock Springer-Verlag, Berlin, 2001.
\newblock \href {https://doi.org/10.1007/978-3-662-04576-3}
  {\path{doi:10.1007/978-3-662-04576-3}}.

\bibitem{Kollar_Rational_Curves}
J\'anos Koll\'ar.
\newblock {\em Rational curves on algebraic varieties}, volume~32.
\newblock Springer-Verlag, Berlin, 1996.
\newblock \href {https://doi.org/10.1007/978-3-662-03276-3}
  {\path{doi:10.1007/978-3-662-03276-3}}.

\bibitem{nonfreecurves}
Brian Lehmann, Eric Riedl, and Sho Tanimoto.
\newblock Non-free curves on {F}ano varieties, 2023.
\newblock URL: \url{https://arxiv.org/abs/2304.05438}, \href
  {http://arxiv.org/abs/2304.05438} {\path{arXiv:2304.05438}}.

\bibitem{codimjumping}
Brian Lehmann, Eric Riedl, and Sho Tanimoto.
\newblock Codimension of jumping loci, 2024.
\newblock URL: \url{https://arxiv.org/abs/2408.08759}, \href
  {http://arxiv.org/abs/2408.08759} {\path{arXiv:2408.08759}}.

\bibitem{nonfreesections}
Brian Lehmann, Eric Riedl, and Sho Tanimoto.
\newblock Non-free sections of {F}ano fibrations, 2024.
\newblock URL: \url{https://arxiv.org/abs/2301.01695}, \href
  {http://arxiv.org/abs/2301.01695} {\path{arXiv:2301.01695}}.

\bibitem{geoconsistency}
Brian Lehmann, Akash~Kumar Sengupta, and Sho Tanimoto.
\newblock Geometric consistency of {M}anin's conjecture.
\newblock {\em Compos. Math.}, 158(6):1375--1427, 2022.
\newblock \href {https://doi.org/10.1112/s0010437x22007588}
  {\path{doi:10.1112/s0010437x22007588}}.

\bibitem{Lehmann_Tanimoto_2019}
Brian Lehmann and Sho Tanimoto.
\newblock Geometric {M}anin's conjecture and rational curves.
\newblock {\em Compositio Mathematica}, 155(5):833--862, 2019.
\newblock \href {https://doi.org/10.1112/S0010437X19007103}
  {\path{doi:10.1112/S0010437X19007103}}.

\bibitem{cubichypersurfaces}
Adelina M\^{a}nz\u{a}\c{t}eanu.
\newblock Rational curves on cubic hypersurfaces over finite fields.
\newblock {\em Mathematika}, 67(2):366--387, 2021.
\newblock \href {https://doi.org/10.1112/mtk.12073}
  {\path{doi:10.1112/mtk.12073}}.

\bibitem{Mori}
Shigefumi Mori.
\newblock Projective manifolds with ample tangent bundles.
\newblock {\em Annals of Mathematics}, 110(3):593--606, 1979.
\newblock URL: \url{http://www.jstor.org/stable/1971241}.

\bibitem{Pugin_Thesis}
Thibaut Pugin.
\newblock {\em An {A}lgebraic {C}ircle {M}ethod}.
\newblock ProQuest LLC, Ann Arbor, MI, 2011.
\newblock Thesis (Ph.D.)--Columbia University.

\bibitem{RiedlYang}
Eric Riedl and David Yang.
\newblock Kontsevich spaces of rational curves on {F}ano hypersurfaces.
\newblock {\em J. Reine Angew. Math.}, 748:207--225, 2019.
\newblock \href {https://doi.org/10.1515/crelle-2016-0027}
  {\path{doi:10.1515/crelle-2016-0027}}.

\bibitem{sawinwaring}
Will Sawin.
\newblock The asymptotic in {W}aring's problem over function fields via
  singular sets in the circle method, 2024.
\newblock URL: \url{https://arxiv.org/abs/2412.14053}, \href
  {http://arxiv.org/abs/2412.14053} {\path{arXiv:2412.14053}}.

\bibitem{senguptaFujita}
Akash~Kumar Sengupta.
\newblock Manin's conjecture and the {F}ujita invariant of finite covers.
\newblock {\em Algebra Number Theory}, 15(8):2071--2087, 2021.
\newblock \href {https://doi.org/10.2140/ant.2021.15.2071}
  {\path{doi:10.2140/ant.2021.15.2071}}.

\bibitem{starr2003}
Jason Starr.
\newblock The {K}odaira dimension of spaces of rational curves on low degree
  hypersurfaces, 2003.
\newblock URL: \url{https://arxiv.org/abs/math/0305432}, \href
  {http://arxiv.org/abs/math/0305432} {\path{arXiv:math/0305432}}.

\bibitem{stichtenoth2009algebraic}
Henning Stichtenoth.
\newblock {\em Algebraic function fields and codes}, volume 254.
\newblock Springer Science \& Business Media, 2009.

\bibitem{tanimoto2018geometricaspectsmaninsconjecture}
Sho Tanimoto.
\newblock Geometric aspects of {M}anin's conjecture, 2018.
\newblock URL: \url{https://arxiv.org/abs/1812.07148}, \href
  {http://arxiv.org/abs/1812.07148} {\path{arXiv:1812.07148}}.

\bibitem{key-2}
{The Stacks Project Authors}.
\newblock \textit{Stacks Project}, 2024.
\newblock URL: \url{https://stacks.math.columbia.edu}.

\end{thebibliography}

\end{document}